\theoremstyle{plain}
\newtheorem{thm}{\protect\theoremname}
  \theoremstyle{remark}
  \newtheorem{rem}[thm]{\protect\remarkname}
  \theoremstyle{plain}
  \newtheorem{prop}[thm]{\protect\propositionname}
\newcommand{\1}{\mbox{1\hspace{-1mm}I}}
\numberwithin{equation}{section}
  \providecommand{\propositionname}{Proposition}
  \providecommand{\remarkname}{Remark}
\providecommand{\theoremname}{Theorem}
\begin{document}
\selectlanguage{american}%
\global\long\def\1{\mbox{1\hspace{-1mm}I}}
\selectlanguage{english}%

\title{MEAN FIELD APPROACH TO STOCHASTIC CONTROL WITH PARTIAL INFORMATION}

\author{Alain Bensoussan\\
\thanks{The first author acknowledges the financial support by the National Science Foundation DMS- 1612880, and the
Research Grants Council of the Hong Kong Special Administrative Region
(CityU , CityU 113 03 316). The second author acknowledges the financial support from The Hong Kong RGC GRF 14301015 with the project title: Advance in Mean Field Theory and The Hong Kong RGC GRF 14300717 with the project title: New kinds of forward-backward stochastic systems with applications, and the financial support from the Faculty of Science of Chinese University of Hong Kong via the CUHK Direct Grants with internal code numbers: 3132761 and 3132762.}International Center for Decision and Risk Analysis\\ Jindal School of Management, University of Texas at Dallas\\
School of Data Science, City University Hong Kong\\
 \\
Sheung Chi Phillip Yam\\
Department of Statistics, The Chinese University of Hong Kong }
\maketitle

\section{INTRODUCTION }

The classical stochastic control problem under partial information
, as , for instance , described in the book of A. Bensoussan\cite{ABE}
, can be formulated as a control problem for Zakai equation, whose
solution is the unnormalized conditional probability distribution
of the state of the system, which is not directly accessible. Zakai
equation is a stochastic Fokker-Planck equation. Therefore, the mathematical
problem to be solved is very similar to that met in Mean Field Control
theory. Since Mean Field Control theory is much posterior to the development
of Stochastic Control with partial information, the tools, techniques
and concepts obtained in the last decade, for Mean Field Games and
Mean field type Control theory, have not been used for the control
of Zakai equation. It is the objective of this work to conncet the
two theories. Not only , we get the power of new tools, but also we
get new insights for the problem of stochastic control with partial
information. For mean field theory, we get new interesting applications,
but also new problems. The possibility of using direct methods is
, of course, quite fruitful. Indeed, if Mean Field Control Theory
is a a very comprehensive and powerful framework , it leads to very
complex rquations, like the Master equation, which is a nonlinear
infinite dimensional P.D.E., for which general theorems are hardly
available, although an active research in this direction is performed,
see P. Cardialaguet, F. Delarue, J.M. Lasry, P.L. Lions \cite{CDLL}.
Direct methods are particularly useful to obtain regularity results.
We will develop in detail the linear quadratic regulator problem,
but because we cannot just consider the gaussian case , well know
results , like the separation principle are not available. An interesting
and important result is available in the literature, due to A. Makowsky,
\cite{ARM}. It describes the solution of Zakai equation for linear
systems with general initial condition ( non-gaussian). Curiouly ,
this result had not been exploited for the control aspect, in the
literature. We show that the separation principle can be extended
for quadratic pay-off functionals, but the Kalman filter is much more
complex than in the gaussian case. Finally we compare our work to the work of Bandini, Corso, Fuhrman and Pham \cite{BCFP} and we show that the example E. Bandini et al. provided does not cover ours. Our system remains nonlinear in their setting.

\section{STOCHASTIC CONTROL WITH PARTIAL INFORMATION}

\subsection{THE PROBLEM }

We describe the problem formally, without making precise the assumptions.
The state of the system $x(t)\in R^{n}$ is solution of a diffusion 

\begin{equation}
dx=g(x,v)dt+\sigma(x)dw\label{eq:2-1}
\end{equation}

\[
x(0)=\xi
\]
so, we assume that there exists a probability space $\Omega,\mathcal{A},P$
on which are constructed a random variable $\xi$ and a standard Wierner
process in $R^{n}$, which is independent of $\xi$ . There is a control
$v(t)$ in the drift term , with values in $R^{m}$. Since, we cannot
access to the state $x(t),$ which is not observable, it cannot be
defined by a feedback on the state, nor adapted to the state. Formally,
we have an observation equation 

\begin{equation}
dz=h(x)dt+db(t)\label{eq:2-2}
\end{equation}
in which $z(t)$, with values in $R^{d}$ , represents the observation
and $b(t)$ is also a Wiener process, independent of the pair $(\xi,w(.)).$
The function $h(x)$ corresponds to the measurement of the state $x$
and $b(t)$ captures a measurement error. So the control $v(t)$ should
be adapted to the process $z(t),$not a feedback of course. It is
well known that this construction is ill-posed. Indeed, the control
is adapted to the observation, which depends also on the state, which
depends on the control. It is a chicken and egg effect, that is usually
solved by the Girsanov theorem, at the price of constructing apropriately
the Wiener process $b(t).$ In practice, we construct on $($$\Omega,\mathcal{A},P$)
three objects, $\xi,w(.),z(.).$ The processes $w(.),z(.)$ are independent
Wiener processes on $R^{n},$$R^{d}$ respectively and $\xi$ is independent
of these two processes. We set 

\[
\mathcal{F}^{t}=\sigma(\xi,w(s),z(s),s\leq t)\quad\mathcal{Z}^{t}=\sigma(z(s),s\leq t)
\]
 the filtrations on $(\Omega,\mathcal{A},P)$ generated by $(\xi,w(.),z(.))$
and $z(.)$ respectively. The process $z(.)$ is the observation process,
but it is defined externally. We can then choose the control $v(.)$
as a process with values in $R^{m},$which is adapted to the filtration
$\mathcal{Z}^{t}.$ So , it is perfectly well defined. , as well as
the process $x(.)$ solution of (\ref{eq:2-1}). In fact in (\ref{eq:2-1})
$v(.)$ is fixed, like $\xi$ and $w(.),$and we assume that we can
solve the S.D.E. (\ref{eq:2-1}) in a strong sense. So $x(.)$ is
well defined. Here comes Girsanov theorem. We define the scalar $P,\mathcal{F}^{t}$
martingale $\eta(t)$, solution of the equation 

\begin{equation}
d\eta(t)=\eta(t)\,h(x(t)).dz(t),\:\eta(0)=1\label{eq:2-3}
\end{equation}
This martingale allows to define a mew probability on $\Omega,\mathcal{A}$
, denoted $P^{v(.)}$ to emphasize the fact that it depends on the
control $v(.).$ It is given by the Radon- Nikodym derivative 

\begin{equation}
\dfrac{dP^{v(.)}}{dP}|_{\mathcal{F}^{t}}=\eta(t)\label{eq:2-4}
\end{equation}
Finally, we define the process 

\begin{equation}
b^{v(.)}(t)=z(t)-\int_{0}^{t}h(x(s))ds\label{eq:2-5}
\end{equation}
which also depends on the control decision . We take a finite horizon
$T$, to fix ideas. Making the change of probability from $P$ to
$P^{v(.)}$ and considering the probability space $(\Omega,\mathcal{F}^{T},P^{v(.)})$
, then $b^{v(.)}$ appears as a standard Wiener process, which is
independent of $w(.)$ and $\xi.$ Therefore, (\ref{eq:2-5}) is a
template of (\ref{eq:2-2}) as far as probability laws are concerned.
We can then rigorously define the control problem ( without the chicken
and egg effect) 

\begin{equation}
J(v(.))=E^{v(.)}[\int_{0}^{T}f(x(t),v(t))dt+f_{T}(x(T))]\label{eq:2-6}
\end{equation}
in which the functions $f(x,v)$ and $f_{T}(x)$ represent the running
cost and the final cost contributing to the pay off functional to
be minimized. The notation $E^{v(.)}$ refers to the expected value
with respect to the probability law $P^{v(.)}.$ 
\begin{rem}
\label{rem2-1} The previous presentation , which is currently the
common one to formalize stochastic control problems with partial information,
has a slight drawback, in comparison with the description of the problem
with full information. With full information , there is no $\mathcal{Z}^{t}$
and the underlying filtration $\mathcal{F}^{t}=\sigma(\xi,w(.))$
is accessible. A control $v(.)$ is a stochastic process adapted to
$\mathcal{F}^{t}$. We call it open-loop , because it is externally
defined ( this should not be confused with the practice in engineering
to call open -loop controls, those which are deterministic functions
of time). But , since the state $x(t)$ is also accessible , we can
also consider controls , defined by feedbacks built on the state.
In spite of the difference in the definition, the class of feedback
controls is contained in that of open-loop controls. Indeed , after
constructing the trajectory corresponding to a feedback, we feed the
feedback with that trajectory. We get an open-loop control, leading
to the same cost. The interesting feature of cost functionals of the
type (\ref{eq:2-6}) is that the optimal open-loop control is defined
by a feedback. So restricting ourselves to the subclass of feedback
controls does not hurt. This is very important, when we formulate
the control problem in the framework of mean-field theory. In mean
-field theory , we must define the control with a feedback. Surprisingly,
open-loop controls and feedback controls will lead to different solutions.
In the case of partial information, we have unfortunately no choice.
There is no feedback, since the state is not accessible. With the
formulation above, the observation filtration $\mathcal{Z}^{t}$ is
externally defined , and the control is open-loop, since it is externally
defined as a process adapted to $\mathcal{Z}^{t}.$ It is important
to have this discussion in mind, when we formulate the problem with
mean-field theory.  
\end{rem}

\subsection{CONTROL OF ZAKAI EQUATION}

Note first that the functional (\ref{eq:2-6}) can be written as 

\begin{equation}
J(v(.))=E[\int_{0}^{T}\eta(t)f(x(t),v(t))dt+\eta(T)f_{T}(x(T))]\label{eq:2-7}
\end{equation}
This is obtained by using the Radon-Nikodym derivative (\ref{eq:2-4})
and the martingale property of $\eta(t).$ We next recall the classical
nonlinear filtering theory result. Let $\Psi(x)$ be any bounded continuous
function. We want to express the conditional expectation $E^{v(.)}[\Psi(x(t))|\mathcal{Z}^{t}]$of
the random variable $\Psi(x(t))$ with respect to the $\sigma-$algebra
$\mathcal{Z}^{t},$ on the probability space $\Omega,\mathcal{A},P^{v(.)}.$
We have the basic result of non linear filtering theory 

\begin{equation}
E^{v(.)}[\Psi(x(t))|\mathcal{Z}^{t}]=\frac{E[\eta(t)\Psi(x(t))|\mathcal{Z}^{t}]}{E[\eta(t)|\mathcal{Z}^{t}]}=\dfrac{\int_{R^{n}}\Psi(x)q(x,t)dx}{\int_{R^{n}}q(x,t)dx}\label{eq:2-8}
\end{equation}
where $q(x,t)$ is called the un-normalized conditional probaility
density of the random variable $x(t)$ with respect to the $\sigma-$algebra
$\mathcal{Z}^{t}$. The conditional probability itself is given by
$\dfrac{q(x,t)}{\int_{R^{n}}q(\xi,t)d\xi}.$ The function $q(x,t)$
is a random field adapted to the filtration $\mathcal{Z}^{t}.$It
is the solution of a stochastic P.D.E. 

\begin{equation}
dq+A^{*}q(x,t)dt+\text{div }(g(x,v(t))q(x,t))\,dt-q(x,t)\,h(x).dz(t)=0\label{eq:2-9}
\end{equation}

\[
q(x,0)=q_{0}(x)
\]
 in which $A^{*}$ is the second order differential operator 

\[
A^{*}\varphi(x)=-\sum_{i,j=1}^{n}\dfrac{\partial^{2}}{\partial x_{i}\partial x_{j}}(a_{ij}\varphi(x))
\]
which is the dual of 

\[
A\varphi(x)=-\sum_{i,j=1}^{n}a_{ij}(x)\dfrac{\partial^{2}\varphi}{\partial x_{i}\partial x_{j}}
\]

with $a(x)=\dfrac{1}{2}\sigma\sigma^{*}(x).$The initial condotion
$q_{0}(x)$ is the probability density of $\xi.$ We suppose that
$\xi$ has a probability density. The random field $q(x,t)$ depends
on $v(.)$ and is thus denoted $q^{v(.)}(x,t)$. From (\ref{eq:2-8})
and (\ref{eq:2-7}) we can write the pay-off $J(v(.))$ as 

\begin{equation}
J(v(.))=E[\int_{0}^{T}\int_{R^{n}}q^{v(.)}(x,t)f(x,v(t))dxdt+\int_{R^{n}}q^{v(.)}(x,T)f_{T}(x)dx]\label{eq:2-10}
\end{equation}
The minimization of $J(v(.))$ is a stochastic control problem for
a dynamic system whose evolution is governed by the stochastic P.D.E.
(\ref{eq:2-9}). 
\begin{rem}
\label{rem2-2}We can elaborate more on the difference between feedback
controls and open-loop controls , as addressed in Remark \ref{rem2-1},
by considering equation (\ref{eq:2-9}) describing the evolution of
the state $q(x,t).$ In this equation $v(t)$ is a stochastic process
adapted to the filtration $\mathcal{Z}^{t},$ so it is fixed with
respect to the space variable $x.$ 
\end{rem}

\section{\label{sec:MEAN-FIELD-APPROACH}MEAN FIELD APPROACH }

\subsection{PRELIMINARIES}

We define the value function 

\begin{equation}
\Phi(q_{0},0)=\inf_{v(.)}J(v(.))\label{eq:3-1}
\end{equation}
and following the main concept of Dynamic Programming, we embed this
value function into a family parametrized by initial conditions $q,t$
, where $q$ denotes an unnormalized probability density on $R^{n}.$We
also make precise the choice of the functional space in which the
function $q(x)$ lies. To fix ideas , we take $q\in L^{2}(R^{n})\cap L^{1}(R^{n})$
and $q(x)\geq0.$ We shall assume that 

\begin{equation}
\int_{R^{n}}|x|^{2}q(x)dx<+\infty\label{eq:3-2}
\end{equation}
Considering functionals on $L^{2}(R^{n})$ , $\Psi(q),$ we say that
it is Gateaux differentiable , with Gateaux derivative $\dfrac{\partial\Psi}{\partial q}(q)(x)$
if the function $t\rightarrow\Psi(q+t\tilde{q})$ is differentiable
with the formula 

\begin{equation}
\dfrac{d}{dt}\Psi(q+t\tilde{q})=\int_{R^{n}}\dfrac{\partial\Psi}{\partial q}(q+t\tilde{q})(x)\tilde{q}(x)dx,\:\forall\tilde{q}(.)\in L^{2}(R^{n})\cap L^{1}(R^{n}),\tilde{q}(x)\geq0\label{eq:3-3}
\end{equation}
We shall assume that with $q\times x\rightarrow\dfrac{\partial\Psi}{\partial q}(q)(x)$
is continuous , satisfying 

\begin{equation}
|\dfrac{\partial\Psi}{\partial q}(q)(x)|\leq c(q)(1+|x|^{2})\label{eq:2-104}
\end{equation}
such that $c(q)$ is continuous and bounded on bounded subsets of
$L^{2}(R^{n}),$ We also need the concept of second order Gateaux
derivative. The second order Gateaux derivative is a functional $\dfrac{\partial^{2}\Psi}{\partial q^{2}}(q)(\xi,\eta)$
such that the function $t\rightarrow$$\Psi(q+t\tilde{q})$ is twice
differentiable in $t$ and 

\begin{equation}
\dfrac{d^{2}}{dt^{2}}\Psi(q+t\tilde{q})=\int_{R^{n}}\dfrac{\partial^{2}\Psi}{\partial q^{2}}(q+t\tilde{q})(\xi,\eta)\tilde{q}(\xi)\tilde{q}(\eta)d\xi d\eta\label{eq:2-112}
\end{equation}
Moreover, the function $q,\xi,\eta\rightarrow$ $\dfrac{\partial^{2}\Psi}{\partial q^{2}}(q)(\xi,\eta)$
is continuous satisfying 

\begin{equation}
|\dfrac{\partial^{2}\Psi}{\partial q^{2}}(q)(\xi,\eta)|\leq c(q)(1+|\xi|^{2}+|\eta|^{2})\label{eq:2-113}
\end{equation}
with $c(q)$ continuous bounded on bounded subsets of $L^{2}(R^{n}),$
From formula (\ref{eq:2-112}), it is clear that we can choose $\dfrac{\partial^{2}\Psi}{\partial q^{2}}(q)(\xi,\eta)$
to be symmetric in $\xi,\eta.$ Set 

\[
f(t)=\Psi(q+t\tilde{q})
\]
 Then , combining the above assumptions, we can assert that $f(t)$
is $C^{2}.$Therefore , we have the identity 

\[
f(1)=f(0)+f'(0)+\int_{0}^{1}\int_{0}^{1}tf"(st)dsdt
\]
 which leads to the formula 

\begin{equation}
\Psi(q+\tilde{q})=\Psi(q)+\int_{R^{n}}\dfrac{\partial\Psi}{\partial q}(q)(x)\tilde{q}(x)dx+\int_{0}^{1}\int_{0}^{1}t\int_{R^{n}}\dfrac{\partial^{2}\Psi}{\partial q^{2}}(q+st\tilde{q})(\xi,\eta)\tilde{q}(\xi)\tilde{q}(\eta)d\xi d\eta\label{eq:2-114}
\end{equation}

\subsection{BELLMAN EQUATION }

We consider the control problem with initial conditions $q,t$ 

\begin{equation}
dq+A^{*}q(x,s)ds+\text{div }(g(x,v(s))q(x,s))\,ds-q(x,s)\,h(x).dz(s)=0,\;s>t\label{eq:3-4}
\end{equation}

\[
q(x,t)=q(x)
\]

\begin{equation}
J_{q,t}(v(.))=E[\int_{t}^{T}\int_{R^{n}}q^{v(.)}(x,s)f(x,v(s))dxds+\int_{R^{n}}q^{v(.)}(x,T)f_{T}(x)dx]\label{eq:3-5}
\end{equation}
and define the value function 

\begin{equation}
\Phi(q,t)=\inf_{v(.)}J_{q,t}(v(.))\label{eq:3-6}
\end{equation}
Assuming that the value function has derivatives 

\[
\dfrac{\partial\Phi}{\partial t}(q,t),\:\dfrac{\partial\Psi}{\partial q}(q)(x),\:\dfrac{\partial^{2}\Psi}{\partial q^{2}}(q)(\xi,\eta)
\]
 then, by standard arguemnts, we can check formally that $\Phi(q,t)$
is solution of the Bellman equation 

\begin{equation}
\dfrac{\partial\Phi}{\partial t}-\int_{R^{n}}A\,\dfrac{\partial\Phi}{\partial q}(q,t)(x)q(x)dx+\label{eq:3-7}
\end{equation}

\[
+\dfrac{1}{2}\int_{R^{n}}\int_{R^{n}}\dfrac{\partial^{2}\Phi}{\partial q^{2}}(q,t)(\xi,\eta)q(\xi)q(\eta)h(\xi).h(\eta)d\xi d\eta+
\]
\[
+\inf_{v}\int_{R^{n}}q(x)\left(f(x,v)+D_{x}\dfrac{\partial\Phi}{\partial q}(q,t)(x).g(x,v)\right)dx=0
\]
\[
\Phi(q,T)=\int_{R^{n}}f_{T}(x)q(x)dx
\]
 The optimal open-loop control is obtained by achieving the infimum
in (\ref{eq:3-7}) . We derive a functional $\hat{v}(q,t),$ which
is a feedback in $q$ but not in $x.$We can then feed the Zakai equation
(\ref{eq:3-4}) with this feedback to get the optimal state equation

\begin{equation}
dq+A^{*}q(x,s)ds+\text{div }(g(x,\hat{v}(q,s))q(x,s))\,ds-q(x,s)\,h(x).dz(s)=0,\:s>t\label{eq:3-8}
\end{equation}
\[
q(x,t)=q(x)
\]
Once we solve this stochastic P.D.E. we obtain the optimal state $\hat{q}(s):=\hat{q}(x,s).$
We then define the control $\hat{v}(s)=\hat{v}(\hat{q}(s),s)$, which
is indeed adapted to the filtration $\mathcal{Z}_{t}^{s}=\sigma(z(\tau)-z(t),\,t\leq\tau\leq s).$
This is the optimal open-loop control. 

\subsection{THE MASTER EQUATION}

The functional $\hat{v}(q,t)$ defined above depends on the function
$\dfrac{\partial\Phi}{\partial q}(q,t)(x)$ denoted $U(x,q,t).$ So
, it is convenient to denote by $\hat{v}(q,U)$ the vector $v$ which
achieves the minimum of 

\begin{equation}
\inf_{v}\int_{R^{n}}q(x)\left(f(x,v)+D_{x}U(x,q).g(x,v)\right)dx\label{eq:3-9}
\end{equation}
in which we omit to write explicity the arguement $t.$ Bellman equation
(\ref{eq:3-7}) can be written as 

\begin{equation}
\dfrac{\partial\Phi}{\partial t}-\int_{R^{n}}A_{x}\,U(x,q)\,q(x)dx+\label{eq:3-10}
\end{equation}

\[
+\dfrac{1}{2}\int_{R^{n}}\int_{R^{n}}\dfrac{\partial^{2}\Phi}{\partial q^{2}}(q,t)(\xi,\eta)q(\xi)q(\eta)h(\xi).h(\eta)d\xi d\eta+
\]
\[
+\int_{R^{n}}q(x)\left(f(x,\hat{v}(q,U))+D_{x}U(x,q).g(x,\hat{v}(q,U))\right)dx=0
\]
\[
\Phi(q,T)=\int_{R^{n}}f_{T}(x)q(x)dx
\]
It is also convenient to set 

\begin{equation}
V(q,t)(x,y)=\dfrac{\partial^{2}\Phi}{\partial q^{2}}(q,t)(x,y)\label{eq:3-11}
\end{equation}
Therefore , Bellman equation reads 

\begin{equation}
\dfrac{\partial\Phi}{\partial t}-\int_{R^{n}}A_{x}\,U(x,q)\,q(x)dx+\label{eq:3-12}
\end{equation}

\[
+\dfrac{1}{2}\int_{R^{n}}\int_{R^{n}}V(q,t)(\xi,\eta)q(\xi)q(\eta)h(\xi).h(\eta)d\xi d\eta+
\]
\[
+\int_{R^{n}}q(x)\left(f(x,\hat{v}(q,U))+D_{x}U(x,q).g(x,\hat{v}(q,U))\right)dx=0
\]
\[
\Phi(q,T)=\int_{R^{n}}f_{T}(x)q(x)dx
\]
The Master equation is an equation for $U(x,q,t).$ It is obtained
by differentiating (\ref{eq:3-12}) with respect to $q.$ We obtain
, formally 

\begin{equation}
\dfrac{\partial U}{\partial t}-A_{x}U-\int_{R^{n}}A_{\xi}V(q,t)(x,\xi)q(\xi)d\xi+\label{eq:3-13}
\end{equation}

\[
+h(x).\int_{R^{n}}V(q,t)(x,\xi)h(\xi)q(\xi)d\xi+\dfrac{1}{2}\int_{R^{n}}\int_{R^{n}}\dfrac{\partial V}{\partial q}(q,t)(\xi,\eta)(x)h(\xi).h(\eta)q(\xi)q(\eta)d\xi d\eta+
\]

\[
+f(x,\hat{v}(q,U))+D_{x}U.g(x,\hat{v}(q,U))+\int_{R^{n}}D_{\xi}V(q,t)(\xi,x).g(\xi,\hat{v}(q,U))q(\xi)d\xi=0
\]
\[
U(x,q,T)=f_{T}(x)
\]
Note that 

\begin{equation}
\dfrac{\partial V}{\partial q}(q,t)(\xi,\eta)(x)=\dfrac{\partial^{3}\Phi}{\partial q^{3}}(q,t)(x,\xi,\eta)\label{eq:3-14}
\end{equation}
which is symmetric in the arguments $(x,\xi,\eta)$

\subsection{SYSTEM OF HJB-FP EQUATIONS}

In Mean field theory approach, the Master equation is the key equation.
However, it is an infinite-dimensional nonlinear P.D.E. Direct approaches
are very limited. The most convenient approach is to use ideas similar
to the classical method of characteristics. This amounts to solving
a system of forward-backward finite dimensional stochastic P.D.E.
Since it is forward-backward the initial conditions matter. We shall
consider that the initial time is $0,$ for convenience. It should
be any time $t\in[0,T].$ This system is called Hamilton-Jacobi-Bellman
for the backward equation and Fokker-Planck for the forward one. The
Fokker-Planck equation is the Zakai equation in which we insert the
optimal feedabck $\hat{v}(q,U).$ So we get 

\begin{equation}
dq+A^{*}q(x,t)dt+\text{div }(g(x,\hat{v}(q,U))q(x,t))\,dt-q(x,t)\,h(x).dz(t)=0\label{eq:3-15}
\end{equation}

\[
q(x,0)=q(x)
\]
The functional $U(x,q,t)$ used in (\ref{eq:3-15}) is the functional
solution of the master equation (\ref{eq:3-13}). We call simply $q(t)$
the solution of (\ref{eq:3-15}). We then set 

\begin{equation}
u(x,t)=U(x,q(t),t)\label{eq:3-16}
\end{equation}
We use the notation $\hat{v}(q_{t},u_{t})$ to represent the functional
$\hat{v}(q,U)$ in which the arguments $q,U$ are replaced by $q(.,t)$
and $U(.,q(.,t),t)=u(.,t).$ We note $q_{t}=q(.,t)$$,u_{t}=u(.,t)$
to simplify. The functional $\hat{v}(q_{t},u_{t})$ achieves the infimum
of 

\begin{equation}
\hat{v}(q_{t},u_{t})=\text{Arg}\,\min_{v}\,\int_{R^{n}}q(x,t)\left(f(x,v)+D_{x}u(x,t).g(x,v)\right)dx\label{eq:3-17}
\end{equation}
The next step is to obtain the equation for $u(x,t).$ It is a long
and tedious calculation, obtained in taking the Ito differential of
the random field defined by (\ref{eq:3-16}) . We give the result
as follows 

\begin{equation}
-du+(Au\,-Du.g(x,\hat{v}(q_{t},u_{t})))dt=f(x,\hat{v}(q_{t},u_{t}))-K(x,t).(dz(t)-h(x)dt)\label{eq:3-18}
\end{equation}

\[
u(x,T)=f_{T}(x)
\]
 where $K(x,t)$ is defined by the formula 

\begin{equation}
K(x,t)=\int_{R^{n}}V(q_{t},t)(x,\xi)h(\xi)q(\xi,t)d\xi\label{eq:3-19}
\end{equation}
In fact , we do not need to compute $K(x,t)$ by formula (\ref{eq:3-19})
, which would require the knowledge of $V(q_{t},t)(x,\xi),$ thus
solving the master equation. From the theory of backward stochastic
P.D.E. the random field $K(x,t)$ is required by the condition of
adaptativity of $u(x,t).$ So the solution of (\ref{eq:3-18}) is
not just $u(x,t)$ but the pair $(u(x,t),K(x,t)),$and we can expect
uniqueness. The equation (\ref{eq:3-18}) is the HJB equation . It
must be coupled with the FP equation (\ref{eq:3-15}) written as follows 

\begin{equation}
dq+A^{*}q(x,t)dt+\text{div }(g(x,\hat{v}(q_{t},u_{t}))q(x,t))\,dt-q(x,t)\,h(x).dz(t)=0\label{eq:3-20}
\end{equation}

\[
q(x,0)=q(x)
\]
recalling also (\ref{eq:3-17}). So the pair (\ref{eq:3-18}), (\ref{eq:3-20})
is the pair of HJB-FP equations. Since $q(x,0)=q(x)$ we can assert 

\begin{equation}
u(x,0)=U(x,q,0)\label{eq:3-21}
\end{equation}
Therefore we can compute $U(x,q,0)$ by solving the system of HJB-FP
equations and using formula (\ref{eq:3-21}). Of course $u(x,t)\not=U(x,q,t).$
To compute $U(x,q,t)$ we have to write the system (\ref{eq:3-18}),
(\ref{eq:3-20}) on the interval $(t,T)$ instead of $(0,T).$ In
that sense, the system of HJB-FP equations (\ref{eq:3-18}), (\ref{eq:3-20})
is a method of characteristics to solve the master equation (\ref{eq:3-13}).
Besides the optimal optimal feedback $\hat{v}(q,U(.,q,t),t)$ can
be derived from the system of HJB-FP equations . Indeed, 

\[
\hat{v}(q,U(.,q,0),0)=\hat{v}(q,u(.,0))
\]
and setting the initial condition of the system of HJB-FP equations
at $t$ instead of $0$ yields $\hat{v}(q,U(.,q,t),t).$ To compute
the value function, we have to rely on Bellman equation (\ref{eq:3-12}).
Let us compute $\dfrac{\partial\Phi}{\partial t}(q,0),$ by using
(\ref{eq:3-12}). The only term which is not known is $\int_{R^{n}}\int_{R^{n}}V(q,0)(\xi,\eta)q(\xi)q(\eta)h(\xi).h(\eta)d\xi d\eta.$However
from (\ref{eq:3-19}) we can write 

\begin{equation}
\int_{R^{n}}\int_{R^{n}}V(q,0)(\xi,\eta)q(\xi)q(\eta)h(\xi).h(\eta)d\xi d\eta=\int_{R^{n}}h(\xi).K(\xi,0)q(\xi)d\xi\label{eq:3-22}
\end{equation}
Collecting results we can write the formula 

\begin{equation}
\dfrac{\partial\Phi}{\partial t}(q,0)=\int_{R^{n}}A_{x}\,u(x,0)\,q(x)dx-\dfrac{1}{2}\int_{R^{n}}h(x).K(x,0)q(x)dx\label{eq:3-23}
\end{equation}
\[
-\int_{R^{n}}q(x)\left(f(x,\hat{v}(q,u(.,0)))+D_{x}u(x,0).g(x,\hat{v}(q,u(.,0)))\right)dx
\]
In a similar way we can define $\dfrac{\partial\Phi}{\partial t}(q,t)$
for any $t$ and any $q.$ Since we know $\Phi(q,T)$ we obtain $\Phi(q,t)$
for any $t.$ So solving the system of HJB-FP equations provides all
the information on the value function and on the optimal feedback. 

\section{WEAK FORMULATION OF ZAKAI EQUATION}

\subsection{WEAK FORMULATION AND LINEAR DYNAMICS}

In this section, we consider Zakai equation as follows 

\begin{equation}
dq+A^{*}q(x,t)dt+\text{div }(g(x,v(t))q(x,t))\,dt-q(x,t)\,h(x).dz(t)=0\label{eq:4-1}
\end{equation}

\[
q(x,0)=q(x)
\]
in which $v(t)$ is a fixed process adapted to the filtration $\mathcal{Z}^{t}=\sigma(z(s),s\leq t).$
If $\psi(x,t)$ is a deterministic function of $x,t$ which is $C^{2}$
in $x$ and $C^{1}$ in $x,$ we deduce immediately from (\ref{eq:4-1}),
by simple integration by parts that 

\[
\int_{R^{n}}dq\psi(x,t)=\int_{R^{n}}q(x,t)[-A\psi(x,t)+g(x,v(t)).D\psi(x,t)]dt+\int_{R^{n}}q(x,t)\psi(x,t)h(x).dz(t)
\]
 and thus 

\begin{equation}
\int_{R^{n}}q(x,t)\psi(x,t)dx=\int_{R^{n}}q(x)\psi(x,0)dx+\label{eq:4-2}
\end{equation}

\[
\int_{0}^{t}\int_{R^{n}}q(x,s)(\dfrac{\partial\psi}{\partial s}-A\psi(x,s)+g(x,v(s)).D\psi(x,s))dxds+\int_{0}^{t}\int_{R^{n}}q(x,s)\psi(x,s)h(x).dz(s)
\]
 which is the weak formulation of Zakai equation . Note that the formulation
(\ref{eq:4-1}) (strong form) and the weak form (\ref{eq:4-2}) are
not equivalent. We may have a weak solution and not a strong solution. 

\subsection{LINEAR SYSTEM AND LINEAR OBSERVATION}

We want to solve Zakai equation in the following case 

\begin{equation}
g(x,v)=Fx+Gv,\:\sigma(x)=\sigma\label{eq:4-3}
\end{equation}
\[
h(x)=Hx
\]
 In general , this case is associated to an initial probability $q(x),$
which is gaussian. In our approach , we cannot take a special $q(x).$It
must remain general, because it is an arument of the value function
and of the solution of the master equation. When we solve the system
of HJB-FP equations, we can take $q(x)$ gaussian, but then we cannot
use this method to obtain the solution of the master equation or of
Bellman equation. For a given control $v(t)$ which is a process adapted
to $\mathcal{Z}^{t},$ Zakai equation reads 

\begin{equation}
dq-\text{tr}aD_{x}^{2}q(x,t)\:dt+\text{div }(Fx+Gv(t))q(x,t))\,dt-q(x,t)\,Hx.dz(t)=0\label{eq:4-4}
\end{equation}

\[
q(x,0)=q(x)
\]
where $a=\dfrac{1}{2}\sigma\sigma^{*}.$A. Makowsky \cite{ARM} has
shown that this equation has an explicit solution, that we describe
now, in a weak form. We first need some notation. We introduce the
matrix $\Sigma(t)$ solution of the Riccati equation 

\begin{equation}
\dfrac{d\Sigma}{dt}+\Sigma(t)H^{*}H\Sigma(t)-F\Sigma(t)-\Sigma(t)F^{*}=2a\label{eq:4-5}
\end{equation}

\[
\Sigma(0)=0
\]
 We then define the matrix $\Phi(t)$ solution of the differential
eqaution 

\begin{equation}
\dfrac{d\Phi}{dt}=(F-\Sigma(t)H^{*}H)\Phi(t)\label{eq:4-6}
\end{equation}

\[
\Phi(0)=I
\]
and 

\begin{equation}
S(t)=\int_{0}^{t}\Phi^{*}(s)H^{*}H\Phi(s)ds\label{eq:4-7}
\end{equation}
We then introuduce stochastic processes $\beta(t)$ and $\rho(t)$
adapted to the filtration $\mathcal{Z}^{t},$ defined by the equations 

\begin{equation}
d\beta(t)=(F\beta(t)+Gv(t))dt+\Sigma(t)H^{*}(dz-H\beta(t)dt)\label{eq:4-8}
\end{equation}

\[
\beta(0)=0
\]
 
\begin{equation}
d\rho(t)=\Phi^{*}(t)H^{*}(dz(t)-H\beta(t)dt)\label{eq:4-9}
\end{equation}
\[
\rho(0)=0
\]
 The process $\beta(t)$ is the Kalman filter for the linear system
(\ref{eq:4-3}) with a deterministic initial condition , equal to
$0.$ If we set 

\begin{equation}
m(x,t)=\Phi(t)x+\beta(t)\label{eq:4-10}
\end{equation}
we obtain the Kalman filter for the same linear dynamic system, with
intial condition $x.$ It satisfies the equation 

\begin{equation}
d_{t}m(x,t)=(Fm(x,t)+Gv(t))dt+\Sigma(t)H^{*}(dz-Hm(x,t)dt)\label{eq:4-11}
\end{equation}

\[
m(x,0)=x
\]
 Finally we introduce the martingale $\theta(x,t)$ defined by 

\begin{equation}
d_{t}\theta(x,t)=\theta(x,t)\,Hm(x,t).dz(t)\label{eq:4-12}
\end{equation}
\[
\theta(x,0)=1
\]
whose solution is the exponential 

\begin{equation}
\theta(x,t)=\exp\left(\int_{0}^{t}Hm(x,s).dz(s)-\tfrac{1}{2}\int_{0}^{t}|Hm(x,s).|^{2}ds\right)\label{eq:4-13}
\end{equation}

\subsection{FORMULAS }

We can state the following result , due to A. Makowsky \cite{ARM},
whose proof can be found in \cite{ABE} 
\begin{prop}
\label{prop4-1} For any test function $\psi(x,t)$ , we have 

\begin{equation}
\int_{R^{n}}q(x,t)\psi(x,t)dx=\int_{R^{n}}\theta(x,t)\left(\int_{R^{n}}\psi(m(x,t)+\Sigma(t)^{\frac{1}{2}}\xi,t)\dfrac{\exp-\dfrac{|\xi|^{2}}{2}}{(2\pi)^{\frac{n}{2}}}d\xi\right)q(x)dx\label{eq:4-14}
\end{equation}
\end{prop}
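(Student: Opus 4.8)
The plan is to verify, by Itô's formula, that the right-hand side of \eqref{eq:4-14}, viewed as a linear functional $\psi\mapsto\Lambda_{\psi}(t)$ of the test function, satisfies the weak Zakai equation \eqref{eq:4-2} for every admissible $\psi$ (say $C^{2,1}$ with derivatives of at most polynomial growth), and then to invoke uniqueness of its solution (as in \cite{ABE}) to identify $\Lambda_{\psi}(t)$ with $\int_{R^{n}}q(x,t)\psi(x,t)\,dx$. Write $\gamma(\xi)=(2\pi)^{-n/2}e^{-|\xi|^{2}/2}$, and for a function $\varphi$ let $\widetilde{\varphi}(\mu,t)=\int_{R^{n}}\varphi(\mu+\Sigma(t)^{1/2}\xi,t)\,\gamma(\xi)\,d\xi$ denote its Gaussian smoothing at covariance $\Sigma(t)$, so that $\Lambda_{\psi}(t)=\int_{R^{n}}\theta(x,t)\,\widetilde{\psi}(m(x,t),t)\,q(x)\,dx$. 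Since $\Sigma(0)=0$, $\Phi(0)=I$, $\beta(0)=0$ and $\theta(x,0)=1$, one has $m(x,0)=x$ and $\Lambda_{\psi}(0)=\int_{R^{n}}\psi(x,0)q(x)\,dx$, which is the initial term in \eqref{eq:4-2}.

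Two smoothing identities, obtained by Gaussian integration by parts (using $\int\xi_{k}\phi(\xi)\gamma(\xi)\,d\xi=\int\partial_{k}\phi(\xi)\gamma(\xi)\,d\xi$), are the engine of the argument. Differentiating $\widetilde{\psi}$ in $t$ and using $\tfrac{d}{dt}(\Sigma^{1/2})\,\Sigma^{1/2}+\Sigma^{1/2}\,\tfrac{d}{dt}(\Sigma^{1/2})=\dot{\Sigma}$ gives
\[
\partial_{t}\widetilde{\psi}(\mu,t)=\widetilde{\partial_{t}\psi}(\mu,t)+\tfrac{1}{2}\sum_{i,j}\dot{\Sigma}_{ij}(t)\,\partial^{2}_{\mu_{i}\mu_{j}}\widetilde{\psi}(\mu,t);
\]
and, since $A$ has constant coefficients and $g(x,v)=Fx+Gv$ is affine (cf. \eqref{eq:4-3}),
\[
\widetilde{A\psi}(\mu,t)=A\widetilde{\psi}(\mu,t),\qquad \widetilde{g(\cdot,v)\cdot D\psi}(\mu,t)=(F\mu+Gv)\cdot D_{\mu}\widetilde{\psi}(\mu,t)+\sum_{i,j}(F\Sigma(t))_{ij}\,\partial^{2}_{\mu_{i}\mu_{j}}\widetilde{\psi}(\mu,t).
\]

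The core computation is Itô's formula applied to $(x,t)\mapsto\theta(x,t)\,\widetilde{\psi}(m(x,t),t)$, using \eqref{eq:4-11}--\eqref{eq:4-12}, the brackets $d\langle m_{i},m_{j}\rangle=(\Sigma H^{*}H\Sigma)_{ij}\,dt$ and $d\langle m,\theta\rangle=\theta\,\Sigma H^{*}Hm\,dt$, and the first identity above. The first-order contribution $-D_{\mu}\widetilde{\psi}\cdot\Sigma H^{*}Hm$ from the drift of $m$ is cancelled by the cross-variation term $+D_{\mu}\widetilde{\psi}\cdot\Sigma H^{*}Hm$; the surviving second-order coefficient is $\tfrac12(\dot{\Sigma}+\Sigma H^{*}H\Sigma)$, which by the Riccati equation \eqref{eq:4-5} equals $a+\tfrac12(F\Sigma+\Sigma F^{*})$. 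The part $a$ produces $-\widetilde{A\psi}$, while the part $\tfrac12(F\Sigma+\Sigma F^{*})$ is exactly what promotes $(Fm+Gv)\cdot D_{\mu}\widetilde{\psi}$ to $\widetilde{g(\cdot,v)\cdot D\psi}$ through the second identity; hence the drift of $\theta(x,t)\widetilde{\psi}(m(x,t),t)$ collapses to $\theta(x,t)\,\widetilde{L_{s}\psi}(m(x,t),t)$, where $L_{s}\psi:=\partial_{s}\psi-A\psi+g(\cdot,v(s))\cdot D\psi$, and its $dz$-integrand is $\theta(x,t)\bigl(\widetilde{\psi}(m(x,t),t)\,Hm(x,t)+H\Sigma(t)\,D_{\mu}\widetilde{\psi}(m(x,t),t)\bigr)$. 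Integrating against $q(x)\,dx$ — legitimate by Fubini and the stochastic Fubini theorem, using the polynomial-growth bounds on $\psi$ and its derivatives, the moment bound \eqref{eq:3-2}, and $L^{p}$-bounds on $\theta(x,t)$ together with moment bounds on $m(x,t)$ — and noting that one further Gaussian integration by parts identifies the integrated $dz$-term with $\Lambda$ evaluated at the vector test function $x\mapsto\psi(x,t)Hx=\psi(x,t)h(x)$, we obtain
\[
\Lambda_{\psi}(t)=\Lambda_{\psi}(0)+\int_{0}^{t}\Lambda_{L_{s}\psi}(s)\,ds+\int_{0}^{t}\Lambda_{\psi(\cdot,s)h}(s)\cdot dz(s),
\]
which is precisely the weak Zakai equation \eqref{eq:4-2}; since $t\mapsto\int_{R^{n}}q(x,t)\psi(x,t)\,dx$ solves the same equation, uniqueness yields \eqref{eq:4-14}.

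The step I expect to be the main obstacle is the algebra inside the core computation: establishing the two Gaussian-smoothing identities with all transposes and symmetrizations in the right places, and then checking that the Riccati equation \eqref{eq:4-5} makes the second-order and transport contributions coalesce \emph{exactly} into the single smoothed operator $\widetilde{L_{s}\psi}$ — this is the long and tedious part. A secondary, more routine difficulty is the rigorous justification of interchanging $\int\cdot\,q(x)\,dx$ with the Itô differential, which needs uniform-in-$t$ moment estimates on $m(x,t)$ and on the exponential martingale $\theta(x,t)$ in tandem with the growth hypotheses on $\psi$. A conceptually shorter route, at the cost of citing the classical Gaussian Kalman--Bucy theory as a black box, is to use linearity of the Zakai equation \eqref{eq:4-4} in its initial datum: for $q(\cdot)=\delta_{x}$ the unnormalized conditional density is $\theta(x,t)$ times the $N(m(x,t),\Sigma(t))$ density, and \eqref{eq:4-14} then follows by superposition over $x\sim q(\cdot)$ after the usual approximation of $\delta_{x}$ by $L^{2}$ data.
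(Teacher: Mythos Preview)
Your proposal is correct and follows essentially the same strategy as the paper: verify the identity at $t=0$, then show by It\^o's formula that the right-hand side of \eqref{eq:4-14} satisfies the weak Zakai equation \eqref{eq:4-2}, whence equality follows by uniqueness. The paper's proof merely outlines this and defers the ``tedious calculation'' to \cite{ABE}, whereas you have spelled out the mechanism (the Gaussian-smoothing identities and the role of the Riccati equation \eqref{eq:4-5} in collapsing the second-order terms), and your alternative superposition-over-Dirac-data argument is a legitimate shortcut not mentioned in the paper.
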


\begin{proof}
Equality (\ref{eq:4-14}) is true for $t=0.$ Let us set 

\[
\mathcal{L}\psi(x,t)=\dfrac{\partial\psi}{\partial s}-A\psi(x,t)+g(x,v(t)).D\psi(x,t)
\]
According to (\ref{eq:4-2}) it is thus sufficient to show that 

\[
d_{t}\theta(x,t)\left(\int_{R^{n}}\psi(m(x,t)+\Sigma(t)^{\frac{1}{2}}\xi,t)\exp-\dfrac{|\xi|^{2}}{2}d\xi\right)=\theta(x,t)\int_{R^{n}}\mathcal{L}\psi(m(x,t)+\Sigma(t)^{\frac{1}{2}}\xi,t)\exp-\dfrac{|\xi|^{2}}{2}d\xi dt+
\]

\[
+\theta(x,t)\int_{R^{n}}\psi(m(x,t)+\Sigma(t)^{\frac{1}{2}}\xi,t)H(m(x,t)+\Sigma(t)^{\frac{1}{2}}\xi)\exp-\dfrac{|\xi|^{2}}{2}d\xi.dz(t)
\]
 This is done through a tedious calculation, whose details can be
found in \cite{ABE} . $\blacksquare$ 
\end{proof}
We shall derive from (\ref{eq:4-14}) a more analytic formula. We
first set 

\begin{equation}
\nu(t)=\int_{R^{n}}q(x,t)dx=\int_{R^{n}}\theta(x,t)q(x)dx\label{eq:4-15}
\end{equation}
 Hence from (\ref{eq:4-12}) 

\[
d\nu(t)=\int_{R^{n}}\theta(x,t)Hm(x,t)q(x)dx.dz(t)
\]
 But from (\ref{eq:4-14}) we see that 

\begin{equation}
\int_{R^{n}}q(x,t)xdx=\int_{R^{n}}\theta(x,t)m(x,t)q(x)dx\label{eq:4-151}
\end{equation}
Therefore 

\begin{equation}
d\nu(t)=H\int_{R^{n}}q(x,t)xdx.dz(t)=\nu(t)H\hat{x}(t).dz(t)\label{eq:4-150}
\end{equation}
 where we have set 

\begin{equation}
\hat{x}(t)=\frac{\int_{R^{n}}q(x,t)xdx}{\int_{R^{n}}q(x,t)dx}\label{eq:4-16}
\end{equation}
 Referring to (\ref{eq:2-8}) we see that 

\[
\hat{x}(t)=E^{v(.)}[x(t)|\mathcal{Z}^{t}]
\]
 the conditional mean of the process $x(t)$ defined by , see (\ref{eq:2-1}) 

\begin{equation}
dx=(Fx(t)+Gv(t))dt+\sigma dw\label{eq:4-17}
\end{equation}
\[
x(0)=\xi
\]
with respect to the filtration $\mathcal{Z}^{t}$ on the probability
space $(\Omega,\mathcal{A},P^{v(.)})$ . It is thus the Kalman filter
in this probabilistic set up. We shall derive the form of its evolution
in the sequel. Now , from (\ref{eq:4-150}) we can assert that 

\begin{equation}
\nu(t)=\exp\left\{ \int_{0}^{t}H\hat{x}(s).dz(s)-\dfrac{1}{2}\int_{0}^{t}|H\hat{x}(s)|^{2}ds\right\} \int_{R^{n}}q(x)dx\label{eq:4-18}
\end{equation}
 recalling that , see (\ref{eq:4-15}), $\nu(0)=\int_{R^{n}}q(x)dx.$
Next, from (\ref{eq:4-13}) and (\ref{eq:4-10}) , we have 

\[
\theta(x,t)=\exp\left(\int_{0}^{t}H(\Phi(s)x+\beta(s)).dz(s)-\tfrac{1}{2}\int_{0}^{t}|H(\Phi(s)x+\beta(s)).|^{2}ds\right)
\]

\[
=\gamma(t)\exp-\frac{1}{2}(x^{*}S(t)x-2x^{*}\rho(t))
\]
with 

\[
\gamma(t)=\exp\left\{ \int_{0}^{t}H\beta(s).dz(s)-\dfrac{1}{2}\int_{0}^{t}|H\beta(s)|^{2}ds\right\} 
\]
and recalling the definition of $S(t)$ and $\rho(t)$, see ( \ref{eq:4-7})
and (\ref{eq:4-9}). From (\ref{eq:4-15}) we obtain 

\[
\nu(t)=\gamma(t)\int_{R^{n}}\exp-\frac{1}{2}(x^{*}S(t)x-2x^{*}\rho(t))\,q(x)dx
\]
 Combining results , we can assert that 

\begin{equation}
\theta(x,t)=\nu(t)\,\dfrac{\exp-\frac{1}{2}(x^{*}S(t)x-2x^{*}\rho(t))}{\int_{R^{n}}\exp-\frac{1}{2}(\xi^{*}S(t)\xi-2\xi^{*}\rho(t))\,q(\xi)d\xi}\label{eq:4-19}
\end{equation}
Next , using (\ref{eq:4-151}) and (\ref{eq:4-10}) we have 

\[
\int_{R^{n}}q(x,t)xdx=\int_{R^{n}}\theta(x,t)(\Phi(t)x+\beta(t))q(x)dx
\]

\[
=\Phi(t)\int_{R^{n}}\theta(x,t)x\,q(x)dx+\beta(t)\int_{R^{n}}\theta(x,t)\,q(x)dx
\]
 therefore , from (\ref{eq:4-16}) we obtain also 

\begin{equation}
\hat{x}(t)=\Phi(t)\dfrac{\int_{R^{n}}\theta(x,t)x\,q(x)dx}{\int_{R^{n}}\theta(x,t)\,q(x)dx}+\beta(t)\label{eq:4-20}
\end{equation}
Let us introduce the deterministic function of arguments $\rho\in R^{n}$
and $t$

\begin{equation}
b(\rho,t)=\dfrac{\int_{R^{n}}x\exp-\frac{1}{2}(x^{*}S(t)x-2x^{*}\rho)\,q(x)dx}{\int_{R^{n}}\exp-\frac{1}{2}(x^{*}S(t)x-2x^{*}\rho)\,q(x)dx}\label{eq:4-21}
\end{equation}
then (\ref{eq:4-20}) can be written 

\begin{equation}
\hat{x}(t)=\Phi(t)b(\rho(t),t)+\beta(t)\label{eq:4-22}
\end{equation}
We can finally state the main formula for the unnormalized conditional
probability $q(x,t)$ 
\begin{thm}
\label{theo4-1} The unnormalized conditional probability $q(x,t)$
is given by 

\begin{equation}
\int_{R^{n}}q(x,t)\psi(x,t)dx=\dfrac{\nu(t)}{\int_{R^{n}}\exp-\frac{1}{2}(x^{*}S(t)x-2x^{*}\rho(t))q(x)dx}\int_{R^{n}}\exp-\frac{1}{2}(x^{*}S(t)x-2x^{*}\rho(t))\times\label{eq:4-23}
\end{equation}

\[
\left[\int_{R^{n}}\psi(\hat{x}(t)+\Phi(t)(x-b(\rho(t),t))+\Sigma(t)^{\frac{1}{2}}\xi,t)\dfrac{\exp-\dfrac{|\xi|^{2}}{2}}{(2\pi)^{\frac{n}{2}}}d\xi\right]q(x)dx
\]
 
\end{thm}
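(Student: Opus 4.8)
The plan is to derive (\ref{eq:4-23}) by a direct substitution into Makowsky's formula (\ref{eq:4-14}) of Proposition \ref{prop4-1}, using only the closed form (\ref{eq:4-19}) for the martingale $\theta(x,t)$ together with an elementary re-centering of the Kalman filter $m(x,t)$ around the conditional mean $\hat{x}(t)$. No new probabilistic input is needed: Proposition \ref{prop4-1} already carries all the content, and Theorem \ref{theo4-1} is merely its reformulation with $\theta$ made explicit and $m$ re-expressed through $\hat{x}$.

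First I would record the re-centering identity. From (\ref{eq:4-10}) we have $m(x,t)=\Phi(t)x+\beta(t)$; adding and subtracting $\Phi(t)b(\rho(t),t)$ and using (\ref{eq:4-22}), i.e. $\hat{x}(t)=\Phi(t)b(\rho(t),t)+\beta(t)$, gives
\[
m(x,t)=\hat{x}(t)+\Phi(t)\bigl(x-b(\rho(t),t)\bigr).
\]
Hence the argument of $\psi$ in the Gaussian average of (\ref{eq:4-14}) equals $\hat{x}(t)+\Phi(t)(x-b(\rho(t),t))+\Sigma(t)^{1/2}\xi$, which is exactly the argument appearing in (\ref{eq:4-23}).

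Second I would substitute the expression (\ref{eq:4-19}) for $\theta(x,t)$ into the right-hand side of (\ref{eq:4-14}). The scalar factor $\nu(t)$ and the normalizing integral $\int_{R^{n}}\exp-\tfrac{1}{2}(\xi^{*}S(t)\xi-2\xi^{*}\rho(t))\,q(\xi)d\xi$ do not depend on the outer integration variable, so they factor out of the $x$-integral, producing the prefactor $\nu(t)/\int_{R^{n}}\exp-\tfrac{1}{2}(x^{*}S(t)x-2x^{*}\rho(t))q(x)dx$ of (\ref{eq:4-23}), while the remaining integrand becomes $\exp-\tfrac{1}{2}(x^{*}S(t)x-2x^{*}\rho(t))\,\bigl[\int_{R^{n}}\psi(m(x,t)+\Sigma(t)^{1/2}\xi,t)(2\pi)^{-n/2}\exp(-|\xi|^{2}/2)\,d\xi\bigr]q(x)$. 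Inserting the re-centered form of $m(x,t)$ from the first step turns this into precisely (\ref{eq:4-23}).

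The step that needs the most care is purely algebraic bookkeeping rather than analysis: one must check that expanding $|H(\Phi(s)x+\beta(s))|^{2}$ inside (\ref{eq:4-13}) produces exactly the quadratic form $x^{*}S(t)x-2x^{*}\rho(t)$ with $S(t)$ and $\rho(t)$ as in (\ref{eq:4-7}) and (\ref{eq:4-9}) --- this is what legitimizes (\ref{eq:4-19}) --- and that the Gaussian constant $(2\pi)^{-n/2}$ and the factor $\nu(t)$ from (\ref{eq:4-15}) are carried through consistently. Once these verifications are in place, (\ref{eq:4-23}) follows by inspection from (\ref{eq:4-14}), (\ref{eq:4-19}), (\ref{eq:4-22}) and (\ref{eq:4-10}).
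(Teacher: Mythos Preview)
Your proposal is correct and follows exactly the paper's approach: the paper states Theorem \ref{theo4-1} without a separate proof, having already derived all the ingredients --- Makowsky's formula (\ref{eq:4-14}), the explicit form (\ref{eq:4-19}) of $\theta(x,t)$, and the relation (\ref{eq:4-22}) --- so that (\ref{eq:4-23}) follows by the direct substitution and re-centering you describe. Your identification of $m(x,t)=\hat{x}(t)+\Phi(t)(x-b(\rho(t),t))$ via (\ref{eq:4-10}) and (\ref{eq:4-22}), together with factoring out the $x$-independent scalars in (\ref{eq:4-19}), is precisely how the paper intends the result to be read off.
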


\subsection{SUFFICIENT STATISTICS }

We see , from formula ( \ref{eq:4-23}) that the unnormalized conditional
probability $q(x,t)$ is completely characterized by two processes
$\hat{x}(t)$ and $\rho(t),$ which are stochastic processes adapted
to $\mathcal{Z}^{t}$ with values in $R^{n}.$ So it is important
to obtain their evolution. We need to introduce a new function $B(\rho,t)$
similar to $b(\rho,t)$ defined by the following formula

\begin{equation}
B(\rho,t)=\dfrac{\int_{R^{n}}xx^{*}\exp-\frac{1}{2}(x^{*}S(t)x-2x^{*}\rho)\,q(x)dx}{\int_{R^{n}}\exp-\frac{1}{2}(\xi^{*}S(t)\xi-2\xi^{*}\rho(t))\,q(\xi)d\xi}\label{eq:4-24}
\end{equation}
and we define 

\begin{equation}
\Gamma(\rho,t)=\Sigma(t)+\Phi(t)(B(\rho,t)-b(\rho,t)b^{*}(\rho,t))\Phi^{*}(t)\label{eq:4-25}
\end{equation}
We are going to show that 
\begin{prop}
\label{prop4-2} The pair $\hat{x}(t)$,$\rho(t)$ is solution of
the following system of S.D.E. 

\begin{equation}
d\hat{x}(t)=(F\hat{x}(t)+Gv(t))dt+\Gamma(\rho(t),t)H^{*}(dz(t)-H\hat{x}(t)dt)\label{eq:4-26}
\end{equation}

\[
\hat{x}(0)=\dfrac{\int_{R^{n}}xq(x)dx}{\int_{R^{n}}q(x)dx}
\]
 
\begin{equation}
d\rho(t)=\Phi^{*}(t)H^{*}\left(dz(t)-H(\hat{x}(t)-\Phi(t)b(\rho(t),t))\right)\label{eq:4-27}
\end{equation}

\[
\rho(0)=0
\]
 
\end{prop}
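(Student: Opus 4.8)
The plan is to handle the two equations in turn, with (\ref{eq:4-27}) falling out almost for free and (\ref{eq:4-26}) requiring an It\^o computation. For (\ref{eq:4-27}), I would use the already established relation (\ref{eq:4-22}), namely $\hat{x}(t)=\Phi(t)b(\rho(t),t)+\beta(t)$, rewritten as $\beta(t)=\hat{x}(t)-\Phi(t)b(\rho(t),t)$; substituting this into the defining equation (\ref{eq:4-9}) for $\rho$ gives (\ref{eq:4-27}) at once, the initial condition $\rho(0)=0$ being the one already imposed in (\ref{eq:4-9}).

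For (\ref{eq:4-26}), write $b(t):=b(\rho(t),t)$ and differentiate (\ref{eq:4-22}): $d\hat{x}(t)=\dot\Phi(t)b(t)\,dt+\Phi(t)\,d_t b(t)+d\beta(t)$. Into this I would insert $\dot\Phi=(F-\Sigma H^{*}H)\Phi$ from (\ref{eq:4-6}), the equation (\ref{eq:4-8}) for $d\beta$, and It\^o's formula for the deterministic map $(\rho,t)\mapsto b(\rho,t)$ driven by (\ref{eq:4-27}). The heart of the matter is the calculus of $b(\rho,t)$: it is the mean of the Gibbs-type probability measure $\mu_{\rho,t}(dx)\propto\exp(-\tfrac12(x^{*}S(t)x-2x^{*}\rho))q(x)\,dx$, so if $Z(\rho,t)$ denotes its normalizing constant then $b=\partial_\rho\log Z$, whence $\partial_\rho b(\rho,t)=B(\rho,t)-b(\rho,t)b^{*}(\rho,t)$ (the covariance under $\mu_{\rho,t}$, with $B$ as in (\ref{eq:4-24})) and $\partial^2_\rho b$ is the third-cumulant tensor of $\mu_{\rho,t}$.

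Next I would compute $\partial_t b$: since $\dot S(t)=\Phi^{*}(t)H^{*}H\Phi(t)$ by (\ref{eq:4-7}), differentiating under the integral sign gives $\partial_t b(\rho,t)=-\tfrac12\,\mathrm{Cov}_{\mu_{\rho,t}}(x,\,x^{*}\Phi^{*}H^{*}H\Phi\,x)$, which expands into a third-cumulant piece contracted against $\Phi^{*}H^{*}H\Phi$ plus the term $-(B-bb^{*})\Phi^{*}H^{*}H\Phi\,b$. Now assemble the differential: the quadratic variation of $\rho$ is $d\rho\,d\rho^{*}=\Phi^{*}H^{*}H\Phi\,dt$ by (\ref{eq:4-27}), so the second-order It\^o term in $d_t b$ is $\tfrac12\sum_{k,l}\frac{\partial^2 b}{\partial\rho_k\partial\rho_l}(\Phi^{*}H^{*}H\Phi)_{kl}\,dt$, i.e. exactly the third-cumulant piece, and it cancels the third-cumulant piece coming from $\partial_t b$ --- this cancellation is precisely why the closed system involves only $b$ and $B$ and no higher moments. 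After the cancellation, $\Phi\,d_t b$ contributes $\Phi(B-bb^{*})\Phi^{*}H^{*}(dz-H\beta\,dt)-\Phi(B-bb^{*})\Phi^{*}H^{*}H\Phi\,b\,dt$. Combining the $dz$-terms with the $\Sigma H^{*}\,dz$ coming from $d\beta$ yields $(\Sigma+\Phi(B-bb^{*})\Phi^{*})H^{*}\,dz=\Gamma(\rho(t),t)H^{*}\,dz$ by (\ref{eq:4-25}), and collecting the drift terms while using $\hat{x}=\Phi b+\beta$ should collapse them to $(F\hat{x}+Gv)\,dt-\Gamma(\rho(t),t)H^{*}H\hat{x}\,dt$, which is (\ref{eq:4-26}); the initial condition follows from (\ref{eq:4-16}) with $q(\cdot,0)=q$, or equivalently from (\ref{eq:4-22}) at $t=0$ with $\Phi(0)=I$, $\beta(0)=0$, $S(0)=0$.

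The step I expect to be the main obstacle is the tensor bookkeeping for the third-order $\rho$-derivatives of $b$ and the explicit verification that the two third-cumulant contributions cancel; getting the indices and the symmetry of $\Phi^{*}H^{*}H\Phi$ right there is where the calculation is genuinely long. A secondary technical point is justifying differentiation under the integral sign, which requires integrability of $q$ against the Gaussian weight $\exp(-\tfrac12 x^{*}S(t)x)$ --- automatic for $t>0$ once $S(t)$ is positive definite, with the boundary case $t=0$ covered by the moment hypothesis (\ref{eq:3-2}) on $q$.
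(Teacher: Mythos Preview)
Your proposal is correct and follows essentially the same route as the paper's own proof: differentiate the relation $\hat{x}(t)=\Phi(t)b(\rho(t),t)+\beta(t)$, apply It\^o's formula to $b(\rho(t),t)$ using the $\rho$-derivatives and $\partial_t b$ (the paper's (\ref{eq:4-29})--(\ref{eq:4-31})), observe the cancellation recorded in (\ref{eq:4-32}), and combine with (\ref{eq:4-6}) and (\ref{eq:4-8}) to get (\ref{eq:4-26}), while (\ref{eq:4-27}) is, as you say, immediate from (\ref{eq:4-9}) and (\ref{eq:4-22}). Your Gibbs-measure/cumulant language is a clean conceptual wrapper for precisely the computation the paper carries out by writing the integrals explicitly; in particular, what you call the ``third-cumulant cancellation'' is exactly the content of the paper's identity (\ref{eq:4-32}).
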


\begin{proof}
The pair $\rho(t),$$\beta(t)$ satisfies (\ref{eq:4-8}), (\ref{eq:4-9})
and $\hat{x}(t)$ satisfies (\ref{eq:4-22}) therefore

\begin{equation}
d\hat{x}(t)=d\beta(t)+\frac{d\Phi(t)}{dt}b(\rho(t),t)dt+\Phi(t)db(\rho(t),t)\label{eq:4-28}
\end{equation}
Next we use 

\begin{equation}
D_{\rho}b(\rho,t)=B(\rho,t)-b(\rho,t)b^{*}(\rho,t)\label{eq:4-29}
\end{equation}

\begin{equation}
\text{tr }(D_{\rho}^{2}b(\rho,t)L)=-B(\rho,t)(L+L^{*})b(\rho,t)-\text{tr }(B(\rho,t)L)\,b(\rho,t)+2b(\rho,t)\,b^{*}(\rho,t)Lb(\rho,t)+\label{eq:4-30}
\end{equation}

\[
+\dfrac{\int_{R^{n}}x\,x^{*}Lx\,\exp-\frac{1}{2}(x^{*}S(t)x-2x^{*}\rho)\,q(x)dx}{\int_{R^{n}}\exp-\frac{1}{2}(x^{*}S(t)x-2x^{*}\rho)\,q(x)dx}
\]
for any matrix $L.$ Therefore 

\[
\dfrac{1}{2}\text{tr }(D_{\rho}^{2}b(\rho,t)\Phi^{*}(t)H^{*}H\Phi(t))=-B(\rho,t)\Phi^{*}(t)H^{*}H\Phi(t)b(\rho,t)-\dfrac{1}{2}\text{tr}(B(\rho,t)\Phi^{*}(t)H^{*}H\Phi(t))\,b(\rho,t)+
\]
\[
+\text{tr }(b(\rho,t)\,b^{*}(\rho,t)\Phi^{*}(t)H^{*}H\Phi(t))b(\rho,t)+\dfrac{1}{2}\,\dfrac{\int_{R^{n}}x\,x^{*}\Phi^{*}(t)H^{*}H\Phi(t)x\,\exp-\frac{1}{2}(x^{*}S(t)x-2x^{*}\rho)\,q(x)dx}{\int_{R^{n}}\exp-\frac{1}{2}(x^{*}S(t)x-2x^{*}\rho)\,q(x)dx}
\]
Also 

\begin{equation}
\dfrac{\partial b(\rho,t)}{\partial t}=\dfrac{1}{2}\text{tr}(B(\rho,t)\Phi^{*}(t)H^{*}H\Phi(t))\,b(\rho,t)-\dfrac{1}{2}\,\dfrac{\int_{R^{n}}x\,x^{*}\Phi^{*}(t)H^{*}H\Phi(t)x\,\exp-\frac{1}{2}(x^{*}S(t)x-2x^{*}\rho)\,q(x)dx}{\int_{R^{n}}\exp-\frac{1}{2}(x^{*}S(t)x-2x^{*}\rho)\,q(x)dx}\label{eq:4-31}
\end{equation}
Hence 

\begin{equation}
\dfrac{\partial b(\rho,t)}{\partial t}+\dfrac{1}{2}\text{tr }(D_{\rho}^{2}b(\rho,t)\Phi^{*}(t)H^{*}H\Phi(t))=-B(\rho,t)\Phi^{*}(t)H^{*}H\Phi(t)b(\rho,t)+\label{eq:4-32}
\end{equation}
\[
+\text{tr }(b(\rho,t)\,b^{*}(\rho,t)\Phi^{*}(t)H^{*}H\Phi(t))b(\rho,t)
\]
 We can then compute $db(\rho(t),t)$ , making use of (\ref{eq:4-32}),
(\ref{eq:4-29}) and (\ref{eq:4-27}). We obtain 

\begin{equation}
db(\rho(t),t)=(B(\rho(t),t)-b(\rho(t),t)b^{*}(\rho(t),t))\Phi^{*}(t)H^{*}(dz(t)-H\hat{x}(t)dt)\label{eq:4-33}
\end{equation}
Using (\ref{eq:4-6}) , (\ref{eq:4-28}) and (\ref{eq:4-33}) we obtain
easily (\ref{eq:4-26}), recalling the definition of $\Gamma(\rho,t),$
see (\ref{eq:4-25}). The relation (\ref{eq:4-27}) follws immediately
from (\ref{eq:4-9}) and (\ref{eq:4-22}). The proof is complete.
$\blacksquare$

We also have the follwing interpretation of $\Gamma(\rho(t),t)$ as
the conditional variance of the process $x(t)$
\end{proof}
\begin{prop}
\label{prop4-3}We have the formula 

\begin{equation}
\Gamma(\rho(t),t)=\dfrac{\int_{R^{n}}xx^{*}q(x,t)dx}{\int_{R^{n}}q(x,t)dx}-\hat{x}(t)\hat{x}(t)^{*}\label{eq:4-34}
\end{equation}
 
\end{prop}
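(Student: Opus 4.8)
The plan is to apply Makowsky's representation formula (\ref{eq:4-14}) of Proposition \ref{prop4-1} to the matrix-valued test function $\psi(x,t)=xx^{*}$ (understood entrywise), and then to unwind the definitions of $\theta$, $\nu$, $b$ and $B$ so that everything collapses to $\Gamma(\rho(t),t)$.

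First I would evaluate the inner Gaussian integral in (\ref{eq:4-14}). Writing $\psi(m(x,t)+\Sigma(t)^{1/2}\xi)=\big(m(x,t)+\Sigma(t)^{1/2}\xi\big)\big(m(x,t)+\Sigma(t)^{1/2}\xi\big)^{*}$ and expanding, the two terms linear in $\xi$ are odd and integrate to zero against the standard Gaussian, while $\int_{R^{n}}\xi\xi^{*}\,\frac{\exp-|\xi|^{2}/2}{(2\pi)^{n/2}}\,d\xi=I$; hence the inner integral equals $m(x,t)m(x,t)^{*}+\Sigma(t)$. Substituting $m(x,t)=\Phi(t)x+\beta(t)$ from (\ref{eq:4-10}) into (\ref{eq:4-14}) then gives
\[
\int_{R^{n}}q(x,t)xx^{*}dx=\int_{R^{n}}\theta(x,t)\big(\Phi(t)x+\beta(t)\big)\big(\Phi(t)x+\beta(t)\big)^{*}q(x)\,dx+\Sigma(t)\int_{R^{n}}\theta(x,t)q(x)\,dx.
\]

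Next I would insert the explicit form (\ref{eq:4-19}) of $\theta(x,t)$ and recognize, using (\ref{eq:4-15}), (\ref{eq:4-21}) and (\ref{eq:4-24}), that $\int_{R^{n}}\theta\,q\,dx=\nu(t)$, $\int_{R^{n}}\theta\,x\,q\,dx=\nu(t)\,b(\rho(t),t)$ and $\int_{R^{n}}\theta\,xx^{*}q\,dx=\nu(t)\,B(\rho(t),t)$ — the normalizer in $b$ and $B$ being exactly the denominator of $\theta$ in (\ref{eq:4-19}). Expanding the product above, collecting terms, and dividing by $\nu(t)=\int_{R^{n}}q(x,t)dx$, this yields
\[
\frac{\int_{R^{n}}q(x,t)xx^{*}dx}{\int_{R^{n}}q(x,t)dx}=\Sigma(t)+\Phi(t)B(\rho(t),t)\Phi^{*}(t)+\Phi(t)b(\rho(t),t)\beta^{*}(t)+\beta(t)b^{*}(\rho(t),t)\Phi^{*}(t)+\beta(t)\beta^{*}(t).
\]
Finally, subtracting $\hat{x}(t)\hat{x}(t)^{*}$ with $\hat{x}(t)=\Phi(t)b(\rho(t),t)+\beta(t)$ from (\ref{eq:4-22}), the terms $\Phi b\beta^{*}$, $\beta b^{*}\Phi^{*}$ and $\beta\beta^{*}$ cancel, leaving $\Sigma(t)+\Phi(t)\big(B(\rho(t),t)-b(\rho(t),t)b^{*}(\rho(t),t)\big)\Phi^{*}(t)$, which is precisely $\Gamma(\rho(t),t)$ by definition (\ref{eq:4-25}). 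This establishes (\ref{eq:4-34}).

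This argument is essentially bookkeeping, so I do not expect a genuine obstacle. The only points requiring care are the consistent identification of the normalizing constants in (\ref{eq:4-19})--(\ref{eq:4-24}) (in particular that the denominator of $\theta$ coincides with the common normalizer of $b$ and $B$), and the mild integrability --- finiteness of $\int_{R^{n}}|x|^{2}q(x)dx$, already assumed in (\ref{eq:3-2}) --- that makes the quadratic test function $\psi(x)=xx^{*}$ admissible in (\ref{eq:4-14}).
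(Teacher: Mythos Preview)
Your argument is correct and follows essentially the same route as the paper: apply Makowsky's representation to the quadratic test function $\psi(x)=xx^{*}$, perform the inner Gaussian integral, and identify the remaining $x$--integrals with $b$, $B$ and the normalizer so that the definition~(\ref{eq:4-25}) of $\Gamma$ appears. The only cosmetic difference is that the paper starts from the already--refined formula~(\ref{eq:4-23}), in which the argument of $\psi$ is written as $\hat{x}(t)+\Phi(t)(x-b(\rho(t),t))+\Sigma(t)^{1/2}\xi$, so the cross terms vanish directly (the weighted mean of $x-b$ is zero) and one lands on $\hat{x}\hat{x}^{*}+\Gamma$ in one step; you instead start from~(\ref{eq:4-14}), carry the $\beta(t)$ terms, and cancel them at the end via $\hat{x}=\Phi b+\beta$ from~(\ref{eq:4-22}) --- the same computation, unwound one level further.
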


\begin{proof}
We use (\ref{eq:4-23}) to write 

\[
\dfrac{\int_{R^{n}}xx^{*}q(x,t)dx}{\int_{R^{n}}q(x,t)dx}=\dfrac{1}{\int_{R^{n}}\exp-\frac{1}{2}(x^{*}S(t)x-2x^{*}\rho(t))q(x)dx}\int_{R^{n}}\exp-\frac{1}{2}(x^{*}S(t)x-2x^{*}\rho(t))\times
\]
\[
\left[\int_{R^{n}}(\hat{x}(t)+\Phi(t)(x-b(\rho(t),t))+\Sigma(t)^{\frac{1}{2}}\xi)(\hat{x}(t)+\Phi(t)(x-b(\rho(t),t))+\Sigma(t)^{\frac{1}{2}}\xi)^{*}\dfrac{\exp-\dfrac{|\xi|^{2}}{2}}{(2\pi)^{\frac{n}{2}}}d\xi\right]q(x)dx
\]
\[
=\dfrac{1}{\int_{R^{n}}\exp-\frac{1}{2}(x^{*}S(t)x-2x^{*}\rho(t))q(x)dx}\int_{R^{n}}\exp-\frac{1}{2}(x^{*}S(t)x-2x^{*}\rho(t))\times
\]
\[
[\left((\hat{x}(t)+\Phi(t)(x-b(\rho(t),t))\right))\left((\hat{x}(t)+\Phi(t)(x-b(\rho(t),t))\right))^{*}+\Sigma(t)]=
\]
 
\[
=\hat{x}(t)\hat{x}(t)^{*}+\Phi(t)(B(\rho(t),t)-b(\rho(t),t))(B(\rho(t),t)-b(\rho(t),t))^{*}\Phi(t)^{*}+\Sigma(t)
\]
\[
=\hat{x}(t)\hat{x}(t)^{*}+\Gamma(\rho(t),t)
\]
 which is (\ref{eq:4-34}). $\blacksquare$ 
\end{proof}

\subsection{THE GAUSSIAN CASE }

We first begin by giving the characteristic function of the unnormalized
probability density ( Fourier transform ) denoted 

\begin{equation}
\hat{Q}(\lambda,t)=\int_{R^{n}}q(x,t)\exp i\lambda^{*}x\,dx=\label{eq:4-35}
\end{equation}

\[
\nu(t)\exp\left[-\dfrac{1}{2}\lambda^{*}\Sigma(t)\lambda+i\lambda^{*}(\hat{x}(t)-\Phi(t)b(\rho(t),t))\right]\times
\]
\[
\dfrac{\int_{R^{n}}\exp-\frac{1}{2}(x^{*}S(t)x-2x^{*}(\rho(t)+i\Phi^{*}(t)\lambda))\,q(x)dx}{\int_{R^{n}}\exp-\frac{1}{2}(x^{*}S(t)x-2x^{*}\rho(t))\,q(x)dx}
\]
 The gaussian case corresponds to an initial value of the system (\ref{eq:4-17})
which is gaussian 

\begin{equation}
q(x)=\dfrac{\exp-\dfrac{1}{2}(x-\bar{x}_{0})^{*}P_{0}^{-1}(x-\bar{x}_{0})}{(2\pi)^{\frac{n}{2}}|P_{0}|^{\frac{1}{2}}}\label{eq:4-36}
\end{equation}
where we have assumed the initial variance $P_{0}$ to be invertible
, to simplify calculations. Using (\ref{eq:4-21}) we obtain 

\begin{equation}
b(\rho,t)=(S(t)+P_{0}^{-1})^{-1}(\rho+P_{0}^{-1}\bar{x}_{0})\label{eq:4-37}
\end{equation}

\begin{equation}
B(\rho,t)=b(\rho,t)b(\rho,t)^{*}+(S(t)+P_{0}^{-1})^{-1}\label{eq:4-38}
\end{equation}
Therefore, from (\ref{eq:4-25}) we obtain 

\begin{equation}
\Gamma(\rho,t)=\Sigma(t)+\Phi(t)(S(t)+P_{0}^{-1})^{-1}\Phi(t)^{*}\label{eq:4-39}
\end{equation}
\[
=P(t)
\]
 which is independent of $\rho.$ An easy calculation shows that $P(t)$
is the solution of the Riccati equation 

\begin{equation}
\dfrac{dP}{dt}+PH^{*}HP-FP-PF^{*}=2a\label{eq:4-40}
\end{equation}

\[
P(0)=P_{0}
\]
 and $\hat{x}(t)$ is then the classical Kalman filter 

\begin{equation}
d\hat{x}(t)=(F\hat{x}(t)+Gv(t))dt+P(t)H^{*}(dz(t)-H\hat{x}(t)dt)\label{eq:4-41}
\end{equation}

\[
\hat{x}(0)=\bar{x}_{0}
\]
 To obtain $q(x,t)$ , we use the characteristic function (\ref{eq:4-35}).
An easy calculation yields 

\begin{equation}
\hat{Q}(\lambda,t)=\nu(t)\exp[i\lambda^{*}\hat{x}(t)-\dfrac{1}{2}\lambda^{*}P(t)\lambda]\label{eq:4-42}
\end{equation}
which is the characteristic function of a gaussian random variable
with mean $\hat{x}(t)$ and variance $P(t).$ Recall that it is a
conditional probability given $\mathcal{Z}^{t}.$ 

\section{LINEAR QUADRATIC CONTROL PROBLEM }

\subsection{SETTING OF THE PROBLEM }

We want to apply the theory developed in section \ref{sec:MEAN-FIELD-APPROACH}
to the linear dynamics and linear observation (\ref{eq:4-3}), with
a quadratic cost 

\begin{equation}
f(x,v)=x^{*}Mx+v^{*}Nv\label{eq:5-1}
\end{equation}

\[
f_{T}(x)=x^{*}M_{T}x
\]
 in which $M,$$M_{T}$ are $n\times n$ symmetric positive semi-definite
matrices and $N$ is a $m\times m$ symmetric positive definte matrix
. We want to solve the control problem (\ref{eq:2-9}), (\ref{eq:2-10})
in this case . We write it as follows 

\begin{equation}
dq-\text{tr}aD_{x}^{2}q(x,t)\:dt+\text{div }(Fx+Gv(t))q(x,t))\,dt-q(x,t)\,Hx.dz(t)=0\label{eq:5-2}
\end{equation}

\[
q(x,0)=q(x)
\]

\begin{equation}
J(v(.))=E[\int_{0}^{T}\int_{R^{n}}q^{v(.)}(x,t)(x^{*}Mx+v(t)^{*}Nv(t))dxdt+\int_{R^{n}}q^{v(.)}(x,T)x^{*}M_{T}xdx]\label{eq:5-3}
\end{equation}
In the sequel we will drop the index $v(.)$ in $q(x,t).$ 

\subsection{APPLICATION OF MEAN FIELD THEORY }

We begin by finding the function $\hat{v}(q,U)$ defined by (\ref{eq:3-9})
. We have to solve the minimization problem 

\begin{equation}
\inf_{v}[v^{*}Nv\int_{R^{n}}q(x)dx+Gv.\int_{R^{n}}D_{x}U(x,q)\,q(x)dx]\label{eq:5-4}
\end{equation}
which yields 

\begin{equation}
\hat{v}(q,U)=-\dfrac{1}{2}N^{-1}G^{*}\dfrac{\int_{R^{n}}D_{x}U(x,q)\,q(x)dx}{\int_{R^{n}}q(x)dx}\label{eq:5-5}
\end{equation}
and thus 

\begin{equation}
\inf_{v}[v^{*}Nv\int_{R^{n}}q(x)dx+Gv.\int_{R^{n}}D_{x}U(x,q)\,q(x)dx]=\label{eq:5-6}
\end{equation}

\[
-\dfrac{1}{4}\dfrac{\int_{R^{n}}D_{x}U(x,q)\,q(x)dx.GN^{-1}G^{*}\int_{R^{n}}D_{x}U(x,q)\,q(x)dx}{\int_{R^{n}}q(x)dx}
\]
 We consider the value function 

\begin{equation}
\Phi(q,t)=\inf_{v(.)}J(v(.))\label{eq:5-7}
\end{equation}
 and we write Bellman equation (\ref{eq:3-12}) 

\begin{equation}
\dfrac{\partial\Phi}{\partial t}+\text{tr }a\int_{R^{n}}D_{x}^{2}U(x,q,t)\,q(x)dx+\label{eq:5-8}
\end{equation}

\[
+\frac{1}{2}\int_{R^{n}}\int_{R^{n}}V(q,t)(\xi,\eta)H\xi.H\eta\,q(\xi)q(\eta)d\xi d\eta+\int_{R^{n}}x^{*}Mx\,q(x)dx+\int_{R^{n}}Fx.D_{x}U(x,q,t)q(x)dx
\]

\[
-\dfrac{1}{4}\dfrac{\int_{R^{n}}D_{x}U(x,q,t)\,q(x)dx.GN^{-1}G^{*}\int_{R^{n}}D_{x}U(x,q,t)\,q(x)dx}{\int_{R^{n}}q(x)dx}=0
\]
\[
\Phi(q,T)=\int_{R^{n}}x^{*}M_{T}x\,q(x)dx
\]
 in which we recall the notation

\[
U(x,q,t)=\dfrac{\partial\Phi(q,t)}{\partial q}(x),\;V(q,t)(x,y)=\dfrac{\partial^{2}\Phi(q,t)}{\partial q^{2}}(x,y)
\]
We can next write the Master equation (\ref{eq:3-13}), which is the
equation for $U(x,q,t).$ We get 

\begin{equation}
\dfrac{\partial U}{\partial t}+\text{tr }a\,D_{x}^{2}U(x,q,t)+\text{tr }a\int_{R^{n}}D_{\xi}^{2}V(q,t)(x,\xi)\,q(\xi)d\xi\label{eq:5-9}
\end{equation}

\[
+Hx.H\int_{R^{n}}\xi V(q,t)(x,\xi)\,q(\xi)d\xi+\frac{1}{2}\int_{R^{n}}\int_{R^{n}}\dfrac{\partial V(q,t)}{\partial q}(\xi,\eta)(x)H\xi.H\eta\,q(\xi)q(\eta)d\xi d\eta
\]

\[
+x^{*}Mx+Fx.D_{x}U(x,q,t)+\int_{R^{n}}F\xi.D_{\xi}V(q,t)(x,\xi)q(\xi)d\xi+
\]
\[
+\dfrac{1}{4}\dfrac{\int_{R^{n}}D_{x}U(x,q,t)\,q(x)dx.GN^{-1}G^{*}\int_{R^{n}}D_{x}U(x,q,t)\,q(x)dx}{(\int_{R^{n}}q(x)dx)^{2}}-\dfrac{1}{2}D_{x}U(x,q,t).GN^{-1}G^{*}\dfrac{\int_{R^{n}}D_{\xi}U(\xi,q,t)\,q(\xi)d\xi}{\int_{R^{n}}q(\xi)d\xi}
\]
\[
-\dfrac{1}{2}\int_{R^{n}}D_{\xi}V(q,t)(x,\xi)q(\xi)d\xi.GN^{-1}G^{*}\dfrac{\int_{R^{n}}D_{\xi}U(\xi,q,t)\,q(\xi)d\xi}{\int_{R^{n}}q(\xi)d\xi}=0
\]
\[
U(x,q,T)=x^{*}M_{T}x
\]

\subsection{SYSTEM OF HJB-FP EQUATIONS}

We now write the system of HJB-FP equations (\ref{eq:3-18}), (\ref{eq:3-20})
. We look for a pair $u(x,t),$$q(x,t)$ adapted random fields solution
of the coupled system 

\begin{equation}
-d_{t}u=\left(\text{tr }aD_{x}^{2}u+x^{*}Mx+Fx.D_{x}u(x,t)+\dfrac{1}{4}\dfrac{\int_{R^{n}}D_{\xi}u(\xi,t)\,q(\xi,t)d\xi.GN^{-1}G^{*}\int_{R^{n}}D_{\xi}u(\xi,t)\,q(\xi,t)d\xi}{(\int_{R^{n}}q(\xi,t)d\xi)^{2}}\right.\label{eq:5-10}
\end{equation}
\[
-\left.\frac{1}{2}D_{x}u(x,t).GN^{-1}G^{*}\frac{\int_{R^{n}}D_{\xi}u(\xi,t)\,q(\xi,t)d\xi}{\int_{R^{n}}q(\xi,t)d\xi}\right)dt-K(x,t).(dz(t)-Hxdt)
\]
\[
u(x,T)=x^{*}M_{T}x
\]
 
\begin{equation}
d_{t}q=\left(\text{tr }aD_{x}^{2}q-\text{div}[(Fx-\frac{1}{2}GN^{-1}G^{*}\frac{\int_{R^{n}}D_{\xi}u(\xi,t)\,q(\xi,t)d\xi}{\int_{R^{n}}q(\xi,t)d\xi})q(x,t)]\right)dt\label{eq:5-11}
\end{equation}

\[
+q(x,t)Hx.dz(t)
\]
 
\[
q(x,0)=q(x)
\]

The random field $K(x,t)$ can be expressed by 

\begin{equation}
K(x,t)=H\int_{R^{n}}\xi V(q_{t},t)(x,\xi)q(\xi,t)d\xi\label{eq:5-12}
\end{equation}

The key result is that we can solve this system of equations explicitly
and obtain the optimal control. We introduce the matrix $\pi(t)$
solution of the Riccati equation 

\begin{equation}
\dfrac{d\pi}{dt}+\pi F+F^{*}\pi-\pi GN^{-1}G^{*}\pi+M=0\label{eq:5-13}
\end{equation}

\[
\pi(T)=M_{T}
\]
 We next introduce the function $Z(x,\rho,t)$ solution of the deterministic
linear P.D.E. 

\begin{equation}
\dfrac{\partial Z}{\partial t}+D_{x}Z.(F-\Gamma(\rho,t)H^{*}H)x+D_{\rho}Z.\Phi^{*}(t)H^{*}H(\Phi(t)b(\rho,t)+x)+\label{eq:5-14}
\end{equation}

\[
+\dfrac{1}{2}\text{tr}D_{x}^{2}Z\,(2a+\Gamma(\rho,t)H^{*}H\Gamma(\rho,t))+\dfrac{1}{2}\text{tr}D_{\rho}^{2}Z\Phi^{*}(t)H^{*}H\Phi(t)-\text{tr}\,D_{x\rho}^{2}Z\Phi^{*}(t)H^{*}H\Gamma(\rho,t)+
\]

\[
+x^{*}\pi(t)GN^{-1}G^{*}\pi(t)x+2\text{tr}a\pi(t)=0
\]

\[
Z(x,\rho,T)=0
\]
 We next introduce the pair of adapted processes $\hat{x}(t),\rho(t)$
solution of the system of SDE 

\begin{equation}
d\hat{x}=(F-GN^{-1}G^{*}\pi(t))\hat{x}(t)dt+\Gamma(\rho(t),t)H^{*}(dz(t)-H\hat{x}(t)dt)\label{eq:5-15}
\end{equation}

\[
\hat{x}(0)=\bar{x}_{0}
\]

\begin{equation}
d\rho=\Phi^{*}(t)H^{*}\left(dz(t)-H(\hat{x}(t)-b(\rho(t),t)\right))\label{eq:5-16}
\end{equation}

\[
\rho(0)=0
\]
They are built on a convenient probablity space on which $z(t)$ is
a standard Wiener process with values in $R^{d}.$ We associate to
the pair $\hat{x}(t),\rho(t)$ the unnormalized conditional probability
$q(x,t)$ defined by Zakai equation 

\begin{equation}
d_{t}q=\left(\text{tr }aD_{x}^{2}q-\text{div}[(Fx-GN^{-1}G^{*}\pi(t)\hat{x}(t))q(x,t)]\right)dt+q(x,t)Hx.dz(t)\label{eq:5-150}
\end{equation}

\[
q(x,0)=q(x)
\]
 We next define the random field 

\begin{equation}
u(x,t)=x^{*}\pi(t)x+Z(x-\hat{x}(t),\rho(t),t)\label{eq:5-151}
\end{equation}
We state the main result of the paper 
\begin{thm}
\label{theo5-1}We have the property 

\begin{equation}
\int_{R^{n}}D_{x}Z(x-\hat{x}(t),\rho(t),t)q(x,t)dx=0,\:\text{a.s. },\forall t\label{eq:5-17}
\end{equation}
 and $u(x,t)$, $q(x,t)$ defined by (\ref{eq:5-151}), (\ref{eq:5-150})
are solution of (\ref{eq:5-10}), (\ref{eq:5-11}) . 

The optimal control is given by 

\begin{equation}
\hat{v}(t)=\hat{v}(q_{t},u_{t})=-N^{-1}G^{*}\pi(t)\hat{x}(t)\label{eq:5-180}
\end{equation}
\end{thm}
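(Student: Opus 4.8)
The strategy is to verify directly that the explicitly constructed pair $(u,q)$ solves the HJB--FP system \eqref{eq:5-10}--\eqref{eq:5-11}, using the structural ansatz \eqref{eq:5-151}, namely $u(x,t)=x^{*}\pi(t)x+Z(x-\hat x(t),\rho(t),t)$, together with the orthogonality relation \eqref{eq:5-17}. The key simplification is that \eqref{eq:5-17} forces the nonlinear mean-field coupling terms in the HJB equation to collapse: since $D_{\xi}u(\xi,t)=2\pi(t)\xi+D_{x}Z(\xi-\hat x(t),\rho(t),t)$, the weighted integral $\int D_{\xi}u(\xi,t)q(\xi,t)d\xi$ reduces, after dividing by $\nu(t)=\int q(\xi,t)d\xi$, to $2\pi(t)\hat x(t)$ because the $Z$-contribution vanishes by \eqref{eq:5-17} and $\int \xi\, q(\xi,t)d\xi/\nu(t)=\hat x(t)$ by definition of $\hat x(t)$. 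Consequently the feedback \eqref{eq:5-17}--type quantity appearing in \eqref{eq:5-11} becomes $-\tfrac12 GN^{-1}G^{*}\cdot 2\pi(t)\hat x(t)=-GN^{-1}G^{*}\pi(t)\hat x(t)$, which is exactly the drift correction in \eqref{eq:5-150}; this already identifies \eqref{eq:5-11} with \eqref{eq:5-150} and yields \eqref{eq:5-180} via \eqref{eq:3-17}.

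\textbf{Order of steps.} First I would establish the orthogonality relation \eqref{eq:5-17}. This is the heart of the argument: one computes $d_{t}\big(\int D_{x}Z(x-\hat x(t),\rho(t),t)q(x,t)dx\big)$ by It\^o's formula, using \eqref{eq:5-150} for $dq$, \eqref{eq:5-15}--\eqref{eq:5-16} for $d\hat x,d\rho$, and the PDE \eqref{eq:5-14} for the time-derivative of $Z$; the choice of the coefficients in \eqref{eq:5-14} (in particular the transport terms $D_{x}Z\cdot(F-\Gamma H^{*}H)x$ and $D_{\rho}Z\cdot\Phi^{*}H^{*}H(\Phi b+x)$ and the second-order block) is precisely engineered so that the resulting SDE for this integral is linear and homogeneous with zero initial data (at $t=0$, $Z(\cdot,\rho(0),0)$ and $\hat x(0)=\bar x_{0}$ make the integrand such that the integral vanishes — or one runs the argument backward from $t=T$ where $Z(\cdot,\cdot,T)=0$). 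Then Gr\"onwall or uniqueness for linear SDEs gives \eqref{eq:5-17} for all $t$, a.s. Second, granting \eqref{eq:5-17}, I would substitute the ansatz into \eqref{eq:5-10}: expand $-d_{t}u = -2x^{*}\dot\pi(t)x\,dt - d_{t}Z(x-\hat x(t),\rho(t),t)$, compute the latter It\^o differential, and match it against the right-hand side of \eqref{eq:5-10}. The terms quadratic in $x$ that do not involve $Z$ must reproduce the Riccati equation \eqref{eq:5-13}: indeed $\dot\pi + \pi F + F^{*}\pi - \pi GN^{-1}G^{*}\pi + M = 0$ is exactly what makes the $x^{*}(\cdots)x$ coefficient vanish, once one uses that the quadratic mean-field term $\tfrac14(\int D u\,q/\nu)^{*}GN^{-1}G^{*}(\int Du\,q/\nu)$ equals $\hat x^{*}\pi GN^{-1}G^{*}\pi\hat x$ and the cross term $-\tfrac12 Du\cdot GN^{-1}G^{*}(\int Du\,q/\nu)$ contributes $-2x^{*}\pi GN^{-1}G^{*}\pi\hat x$ plus a $Z$-gradient piece. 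The remaining terms, collected into the equation for $Z$, must coincide with \eqref{eq:5-14}, and the martingale term identifies $K(x,t)$ with the $dz$-part, consistent with \eqref{eq:5-12}. Third, the terminal conditions match: $u(x,T)=x^{*}M_{T}x+Z(\cdot,\cdot,T)=x^{*}M_{T}x$ since $Z(x,\rho,T)=0$, and $\pi(T)=M_{T}$. Finally, \eqref{eq:5-180} follows by plugging the computed $\int D_{\xi}u\,q/\nu = 2\pi\hat x$ into \eqref{eq:3-17}/\eqref{eq:5-5}.

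\textbf{Main obstacle.} The bookkeeping in establishing \eqref{eq:5-17} is the crux and the main difficulty: one must carry out a full It\^o expansion of $\int D_{x}Z(x-\hat x(t),\rho(t),t)q(x,t)dx$, where both the integrand and the measure $q(x,t)dx$ are It\^o processes driven by the same $dz(t)$, so there are It\^o correction (cross-variation) terms between $dq$ and $d_{t}Z$, and between $d\hat x$ and $d\rho$, that must cancel against the second-order terms built into \eqref{eq:5-14}. Getting the signs and the $\Gamma(\rho,t)$, $\Phi(t)$, $\Phi^{*}(t)H^{*}H\Phi(t)$ coefficients to align requires using Proposition~\ref{prop4-2} (the SDEs \eqref{eq:4-26}--\eqref{eq:4-27} under the \emph{controlled} drift, which here picks up the extra $-GN^{-1}G^{*}\pi\hat x$ term) and the identity \eqref{eq:4-29} for $D_{\rho}b$; it is a long but mechanical verification once the right groupings are identified. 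A secondary subtlety is justifying the interchange of $D_{x}$ and $\int\!\cdot\,q\,dx$ and the integrability needed to apply It\^o's formula in this infinite-dimensional setting, which is handled by the moment assumption \eqref{eq:3-2} and the growth bounds \eqref{eq:2-104}--\eqref{eq:2-113}, exactly as in the formal derivation of the master equation.
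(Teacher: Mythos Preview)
Your plan matches the paper's proof almost exactly: compute the It\^o differential of $\int D_{x}Z(x-\hat x(t),\rho(t),t)q(x,t)\,dx$, observe that its drift is linear homogeneous in that same integral, conclude \eqref{eq:5-17}, and then use the resulting collapse $\int D_{\xi}u\,q/\nu=2\pi(t)\hat x(t)$ to verify \eqref{eq:5-10}--\eqref{eq:5-11} and read off \eqref{eq:5-180}. The paper also differentiates \eqref{eq:5-14} in $x$ first (your implicit step) and obtains $K(x,t)=H[-\Gamma D_{x}Z+\Phi D_{\rho}Z]$ from the $dz$-part, just as you anticipate.

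One correction on the mechanism behind \eqref{eq:5-17}. Your first option---``zero initial data at $t=0$''---is not available: there is no a priori reason that $\int D_{x}Z(x-\bar x_{0},0,0)q(x)\,dx=0$, since $Z(\cdot,0,0)$ is fixed by a \emph{backward} PDE with no structural constraint at time $0$. The paper uses only your parenthetical alternative: the terminal condition $Z(\cdot,\cdot,T)=0$ gives $D_{x}Z(\cdot,\cdot,T)=0$, so the integral vanishes at $T$. The It\^o calculation shows the drift equals $-(F^{*}-H^{*}H\Gamma(\rho(t),t)+H^{*}H\Phi(t))$ times the integral itself, while the diffusion coefficient is \emph{not} a multiple of the integral (it involves $D_{x}^{2}Z$, $D_{x\rho}^{2}Z$ and $D_{x}Z\,x^{*}$). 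Hence ``uniqueness for linear SDEs'' in the forward direction is not the right tool; instead one multiplies by the matrix exponential $\exp\int_{0}^{t}(F^{*}-H^{*}H\Gamma+H^{*}H\Phi)\,ds$ to kill the drift, obtaining a $\mathcal Z^{t}$-martingale that vanishes at $T$, and therefore vanishes identically. With that adjustment your outline is correct.
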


\begin{proof}
We first prove (\ref{eq:5-17}). We differentiate (\ref{eq:5-14})
in $x,$ to obtain 

\begin{equation}
\dfrac{\partial}{\partial t}D_{x}Z+D_{x}^{2}Z(F-\Gamma(\rho,t)H^{*}H)x+(F^{*}-H^{*}H\Gamma(\rho,t))D_{x}Z+\label{eq:5-19}
\end{equation}

\[
+D_{\rho x}^{2}Z\Phi^{*}(t)H^{*}H(\Phi(t)b(\rho,t)+x)+H^{*}H\Phi(t)D_{\rho}Z+
\]
\[
+\dfrac{1}{2}\text{tr}D_{x}^{2}\,D_{x}Z(2a+\Gamma(\rho,t)H^{*}H\Gamma(\rho,t))+\dfrac{1}{2}\text{tr}D_{\rho}^{2}\,D_{x}Z\Phi^{*}(t)H^{*}H\Phi(t)-\text{tr}\,D_{x\rho}^{2}\,D_{x}Z\Phi^{*}(t)H^{*}H\Gamma(\rho,t)+
\]

\[
+2\pi(t)GN^{-1}G^{*}\pi(t)x=0
\]
\[
Z(x,\rho,T)=0
\]
We next consider $q(x,t)$ defined by (\ref{eq:5-150}). A long calculation
then shows that 

\begin{equation}
d_{t}\int_{R^{n}}D_{x}Z(x-\hat{x}(t),\rho(t),t)q(x,t)dx=-(F^{*}-H^{*}H\Gamma(\rho(t),t)+H^{*}H\Phi(t))\int_{R^{n}}D_{x}Z(x-\hat{x}(t),\rho(t),t)q(x,t)dxdt+\label{eq:5-20}
\end{equation}

\[
+\left(\int_{R^{n}}[-D_{x}^{2}Z(x-\hat{x}(t),\rho(t),t)\Gamma(\rho(t),t)+D_{x\rho}^{2}Z(x-\hat{x}(t),\rho(t),t)\Phi^{*}(t)+D_{x}Z(x-\hat{x}(t),\rho(t),t)x^{*}]q(x,t)\right)H^{*}dz(t)
\]
 
\[
\int_{R^{n}}D_{x}Z(x-\hat{x}(T),\rho(T),T)q(x,T)dx=0
\]
 From this relation it follows that 

\[
\int_{R^{n}}D_{x}Z(x-\hat{x}(t),\rho(t),t)q(x,t)dx\,\exp\int_{0}^{t}(F^{*}-H^{*}H\Gamma(\rho(s),s)+H^{*}H\Phi(s))ds
\]
 is a $\mathcal{Z}^{t}$ martingale . Since it vanishes at $T,$ it
is $0$ at any $t,$ a.e. Hence (\ref{eq:5-17}) is obtained.Consider
$u(x,t)$ by formula (\ref{eq:5-151}) , therefore, using (\ref{eq:5-17})
we get 

\begin{equation}
\dfrac{\int_{R^{n}}D_{\xi}u(\xi,t)\,q(\xi,t)d\xi}{\int_{R^{n}}q(\xi,t)d\xi}=2\pi(t)\hat{x}(t)\label{eq:5-201}
\end{equation}
To check (\ref{eq:5-10}) we have to check 

\begin{equation}
-d_{t}u=[\text{tr }aD_{x}^{2}u+x^{*}Mx+Fx.D_{x}u+\hat{x}(t)^{*}\pi(t)GN^{-1}G^{*}\pi(t)\hat{x}(t)-\label{eq:5-21}
\end{equation}

\[
-D_{x}u.GN^{-1}G^{*}\pi(t)\hat{x}(t)]dt-K(x,t).(dz(t)-Hxdt)
\]
\[
u(x,T)=x^{*}M_{T}x
\]
 Note that the final condition is trivially satisfied. We can check
(\ref{eq:5-21}) by direct calculation. We obtain also the value of
$K(x,t)$

\begin{equation}
K(x,t)=H[-\Gamma(\rho(t),t)D_{x}Z(x-\hat{x}(t),\rho(t),t)+\Phi(t)D_{\rho}Z(x-\hat{x}(t),\rho(t),t)]\label{eq:5-22}
\end{equation}
So we have proved that $u(x,t),q(x,t)$ is solution of the system
of HJB-FP equations (\ref{eq:5-10}), (\ref{eq:5-11}). The result
(\ref{eq:5-180}) is an immediate consequence of (\ref{eq:5-201}).
The proof is complete. $\blacksquare$
\end{proof}

\subsection{COMPLEMENTS}

The result (\ref{eq:5-180}) is important. It shows that the optimal
control of the problem (\ref{eq:5-2}), (\ref{eq:5-3}) follows the
celabrated `` Separation Principle'' . We recall that in the deterministic
case , the optimal control, which is necessarily an open-loop control
can be obtained by a linear feedback on the state. Open loop and feedback
controls are equivalent. The separation principle claims that in the
partially observed case, the optimal open loop control ( adapted to
the observation process) can be obtained by the same feedback as in
the deterministic case, replacing the nonobservable state by its best
estimate, the Kalman filter. The fact that the separation principle
holds is well known when the initial state follows a gaussian distribution.
We have proven that it holds in general. What drives the separation
principle is the linearity of the dynamics and of the observation
and the fact that the cost is quadratic. The gaussian assumption does
not play any role. A significant simplification occurs in the gaussian
case, regarding the computation of the Kalman filter. In the gaussian
case, the Kalman filter solves a single equation. In general, the
Kalman filter is coupled to another statistics $\rho(t)$ and the
pair $\hat{x}(t)$, $\rho(t)$ must be obtained simultaneously. 

We proceed by obtaining the value function $\Phi(q,t).$ In fact ,
We obtain $\Phi(q,0).$ The same procedure must be repeated at any
time $t.$ First, we have 

\begin{equation}
\dfrac{\partial\Phi(q,0)}{\partial q}(x)=u(x,0)\label{eq:5-23}
\end{equation}
\[
=x^{*}\pi(0)x+Z(x-\dfrac{\int_{R^{n}}\xi q(\xi)d\xi}{\int_{R^{n}}q(\xi)d\xi},0,0)
\]

We next obtain $\dfrac{\partial\Phi}{\partial t}(q,0)$ by formula
(\ref{eq:3-23}). We obtain 

\begin{equation}
\dfrac{\partial\Phi}{\partial t}(q,0)=-\text{tr}a\int_{R^{n}}D_{x}^{2}u(x.0)q(x)dx-\dfrac{1}{2}\int_{R^{n}}Hx.K(x,0)q(x)dx-\label{eq:5-24}
\end{equation}

\[
-\int_{R^{n}}[x^{*}Mx+\hat{v}(0)^{*}N\hat{v}(0)+D_{x}u(x,0).(Fx+G\hat{v}(0))]q(x)dx
\]
\[
=-2\text{tr }a\pi(0)-\text{tr}a\,\left(\int_{R^{n}}D_{x}^{2}Z(x-\bar{x}_{0},0,0)q(x)dx\right)-\dfrac{1}{2}\int_{R^{n}}Hx.H(-\Gamma(0,0)D_{x}Z(x-\bar{x}_{0},0,0)+D_{\rho}Z(x-\bar{x}_{0},0,0))q(x)dx-
\]
\[
-\int_{R^{n}}Fx.D_{x}Z(x-\bar{x}_{0},0,0)q(x)dx+\left(\bar{x}_{0}^{*}\pi'(0)\bar{x}_{0}-\text{tr }\Gamma(0,0)(M+2\pi(0)F)\right)\int_{R^{n}}q(x)dx
\]
 where 

\[
\bar{x}_{0}=\frac{\int_{R^{n}}xq(x)dx}{\int_{R^{n}}q(x0dx}
\]
 We can finally obtain the value of $\Phi(q,0).$ Since we know the
optimal control for the problem (\ref{eq:5-2}), (\ref{eq:5-3}) we
have 

\[
\Phi(q,0)=J(\hat{v}(.))=
\]

\[
=E[\int_{0}^{T}\int_{R^{n}}q(x,t)(x^{*}Mx+\hat{x}(t)^{*}\pi(t)GN^{-1}G^{*}\pi(t)\hat{x}(t))dt+\int_{R^{n}}q(x,T)x^{*}M_{T}xdx]
\]
 where $q(x,t)$ is the solution of (\ref{eq:5-150}). Therefore also 

\begin{equation}
\Phi(q,0)=E\int_{0}^{T}\nu(t)[\hat{x}(t)^{*}(M+\pi(t)GN^{-1}G^{*}\pi(t))\hat{x}(t)+\text{tr}M\Gamma(\rho(t),t)]dt+\label{eq:5-26}
\end{equation}

\[
+E[\nu(T)(\hat{x}(T)^{*}M_{T}\hat{x}(T)+\text{tr}M\Gamma(\rho T),T)]
\]

The triple $\hat{x}(t),\rho(t),\nu(t)$ is solution of the system
of $S.D.E.$ (\ref{eq:5-15}),(\ref{eq:5-16}) and 

\begin{equation}
d\nu(t)=\nu(t)\,H\hat{x}(t).dz(t)\label{eq:5-25}
\end{equation}

\[
\nu(0)=\int_{R^{n}}q(x)dx
\]
 From the probabilistic formula (\ref{eq:5-26}) it is easy to derive
an analytic formula as follows 

\begin{equation}
\Phi(q,0)=(\bar{x}_{0}^{*}\pi(0)\bar{x}_{0}+\mu(0,0))\int_{R^{n}}q(x)dx\label{eq:5-27}
\end{equation}
 where $\mu(\rho,t)$ is the solution of the linear P.D.E. 

\begin{equation}
\dfrac{\partial\mu}{\partial t}+D_{\rho}\mu.\Phi^{*}(t)H^{*}H\Phi(t)b(\rho,t)+\label{eq:5-28}
\end{equation}

\[
+\dfrac{1}{2}\text{tr}D_{\rho}^{2}\mu\Phi^{*}(t)H^{*}H\Phi(t)+\text{tr}(M+\pi(t)\Gamma(\rho,t)H^{*}H)\Gamma(\rho,t)=0
\]
\[
\mu(\rho,T)=\text{tr}M_{T}\Gamma(\rho,T)
\]
 We can apply these results in the gaussian case. We first solve the
P.D.E s (\ref{eq:5-14}) and (\ref{eq:5-28}). We recall that $\Gamma(\rho,t)=P(t),$
then $Z(x,\rho,t)$ and $\mu(\rho,t)$ are independent of $\rho$
and as easily seen 

\begin{equation}
Z(x,\rho,t)=x^{*}\Lambda(t)x+\beta(t)\label{eq:5-29}
\end{equation}
 with 

\begin{equation}
\dfrac{d\Lambda}{dt}+\Lambda(t)(F-P(t)H^{*}H)+(F^{*}-H^{*}HP(t))\Lambda(t)+\pi(t)GN^{-1}G^{*}\pi(t)=0\label{eq:5-30}
\end{equation}

\[
\Lambda(T)=0
\]
 
\begin{equation}
\beta(t)=\int_{t}^{T}\text{tr}\Lambda(s)(2a+P(s)H^{*}HP(s))ds\label{eq:5-31}
\end{equation}
Similarly $\mu(\rho,t)=\mu(t)$ given by 

\begin{equation}
\mu(t)=\int_{t}^{T}\text{tr}\pi(s)P(s)H^{*}HP(s)ds+\text{tr }M_{T}P(T)\label{eq:5-32}
\end{equation}
Then 

\begin{equation}
\dfrac{\partial\Phi(q,0)}{\partial q}(x)=x^{*}\pi(0)x+(x-\bar{x}_{0})^{*}\Lambda(0)(x-\bar{x}_{0})+\beta(0)\label{eq:5-33}
\end{equation}

\begin{equation}
\dfrac{\partial\Phi}{\partial t}(q,0)=-2\text{tr }(a+P(0)F)(\pi(0)+\Lambda(0))\label{eq:5-34}
\end{equation}

\[
+\bar{x}_{0}^{*}\pi'(0)\bar{x}_{0}-\text{tr }P(0)M+\text{tr}P(0)H^{*}HP(0)\Lambda(0)
\]
 
\begin{equation}
\Phi(q,0)=\bar{x}_{0}^{*}\pi(0)\bar{x}_{0}+\mu(0)\label{eq:5-35}
\end{equation}

\section{COMPARAISON}

\subsection{OBJECTIVES}

We compare in this section our work with the approach of E. Bandini,
A. Cosso, M. Fuhrmann, H. Pham \cite{BCFP}. They consider a general
problem of stochastic control with partial information, to which our
problem can be reduced. Their set up leads to a conditional probability,
( hence normalized) solution of a linear stochastic PDE, which they
call DMZ equation ( for Duncan- Mortensen-Zakai equation). They formulate
a control problem for this infinite dimensional state equation, for
which they write a Bellman equation. The solution is a functional
on the Wasserstein space of probability measures, since indeed the
state is a probability. When we formulate our problem in their set
up, our Zakai equation cannot be their DMZ equation, since we have
not a probability , but an un-normalized probability. To make the
comparison easy we keep our model, but we follow the set up of \cite{BCFP}.
We explain the difference between the two equations ,and also the
difference between our Bellman equation and their Bellman equation.
Although our problem can appear as a particular case of \cite{BCFP},
it is at the price of complicating it, which turns out to be not suitable.
The discussion will explain the reasons. E. Bandini et al. provide
an example with linear equations, which does not cover ours. In the
set up of E. Bandini et al. our system remains nonlinear, which is
also a consequence of the complication of the approach . We remain
formal in our presentation, since we want to discuss the concepts
and compare the methods. 

\subsection{USE OF THE SET UP OF \cite{BCFP}}

In the set up of \cite{BCFP}, we consider the pair $x(t),\eta(t)$
solution of the system 

\begin{equation}
dx=g(x,v)dt+\sigma(x)dw\label{eq:6-1}
\end{equation}

\[
x(0)=x_{0}
\]

\begin{equation}
d\eta(t)=\eta(t)h(x(t)).dz(t)\label{eq:6-2}
\end{equation}

\[
\eta(0)=\eta_{0}
\]
 in which $w(.),z(.)$ are independent Wiener processes and $x_{0},\eta_{0}$
are random variables independendent of $w(.),z(.).$ We observe only
the process $z(.).$ The DMZ equation introduced by \cite{BCFP} is
the equation for the conditional probability of the pair $x(t),\eta(t)$
given the $\sigma-$ algebra $\mathcal{Z}^{t}$$=\sigma(z(s),s\leq t).$
In (\ref{eq:6-1}) the control $v(t)$ is simply adapted to $\mathcal{Z}^{t}$
. 

If $\varphi(\eta,x,t)$ is a deterministic function on $R^{n+1}\times R^{+}$
, we are interested in the process $\rho(\varphi)(t)=E[\varphi(\eta(t),x(t),t)|\mathcal{Z}^{t}].$
It is the solution of the DMZ equation. We use the notation 

\begin{equation}
A_{x}\varphi(\eta,x,t)=-\text{tr}(D_{x}^{2}\varphi(\eta,x,t)a(x))\label{eq:6-3}
\end{equation}
with $a(x)=\frac{1}{2}\sigma(x)\sigma^{*}(x).$ We note 

\begin{equation}
A_{x}^{*}\varphi(\eta,x,t)=-\sum_{ij}\dfrac{\partial^{2}}{\partial x_{i}\partial x_{j}}(a_{ij}(x)\varphi(\eta,x,t))\label{eq:6-4}
\end{equation}
We next define the operators 

\begin{equation}
\mathcal{L}^{v(.)}\varphi(\eta,x,t)=D_{x}\varphi(\eta,x,t).g(x,v(t))-A_{x}\varphi(\eta,x,t)+\dfrac{1}{2}\eta^{2}|h(x)|^{2}\dfrac{\partial^{2}\varphi(\eta,x,t)}{\partial\eta^{2}}\label{eq:6-5}
\end{equation}

\begin{equation}
\mathcal{M}\varphi(\eta,x,t)=\eta\dfrac{\partial\varphi(\eta,x,t)}{\partial\eta}h(x)\label{eq:6-6}
\end{equation}
Then the DMZ equation is 

\begin{equation}
d\rho(\varphi)(t)=\rho(\dfrac{\partial\varphi}{\partial t}+\mathcal{L}^{v(.)}\varphi)(t)dt+\rho(\mathcal{M}\varphi)(t).dz(t)\label{eq:6-7}
\end{equation}

\[
\rho(\varphi)(0)=E\varphi(\eta_{0},x_{0},0)
\]
 In the sequel we assume the existence of a density $p(\eta,x,t)$
which is the joint conditional probability density of $\eta(t),x(t)$
given $\mathcal{Z}^{t}$ . It is defined by 

\begin{equation}
\rho(\varphi)(t)=\int p(\eta,x,t)\varphi(\eta,x,t)d\eta dx\label{eq:6-8}
\end{equation}
 The conditional probability density is the solution of the stochastic
P.D.E. 

\begin{equation}
dp+[A_{x}^{*}p(\eta,x,t)+\label{eq:6-9}
\end{equation}

\[
+\text{div}(g(x,v(t))p(\eta,x,t))-\dfrac{1}{2}|h(x)|^{2}\dfrac{\partial^{2}}{\partial\eta^{2}}(\eta^{2}p(\eta,x,t))\,]dt=-h(x)\dfrac{\partial}{\partial\eta}(\eta p(\eta,x,t)).dz(t)
\]
\[
p(\eta,x,0)=p_{0}(\eta,x)
\]
 It is easy to check that 

\begin{equation}
q(x,t)=\int\eta p(\eta,x,t)d\eta\label{eq:6-10}
\end{equation}
is the solution of Zakai equation (\ref{eq:2-9}) , provided that
the initial condition

\begin{equation}
q_{0}(x)=\int\eta p_{0}(\eta,x)d\eta\label{eq:6-11}
\end{equation}
It is then clear , that although $p(\eta,x,t)$ is indeed a probability
density , $q(x,t)$ is not . Conversely, if we start with $q_{0}(x)$
and want to solve Zakai equation (\ref{eq:2-9}) , we can use (\ref{eq:6-10})
by looking for $p(\eta,x,t)$ solution of the DMZ equation (\ref{eq:6-9})
. We need to take the initial condition 

\begin{equation}
p_{0}(\eta,x)=\delta(\eta-\int q_{0}(\xi)d\xi)\otimes\dfrac{q_{0}(x)}{\int q_{0}(\xi)d\xi}\label{eq:6-12}
\end{equation}
This is not a probability density, so we need to use the weak formulation,
to proceed. 

We get some kind of interesting quandary. Using the set up \cite{BCFP}
we can use probability measures, the Wasserstein topology and the
lifting method of P.L. Lions, but the price to pay is to increase
the dimension by 1, with a nonlinearity. If we stay with the traditional
set up, we have to work with unnormalized probability densities. If
we can work with densities, it is not a serious drawback, but otherwise
we have to find an alternative to the Wasserstein space and the lifting
procedure, and it is not clear how to proceed. We can ,of course,
consider Kushner equation, instead of Zakai equation, whose solution
is a probability. But Kushner equation is nonlinear, conversely to
Zakai equation. 

\subsection{BELLMAN EQUATION }

We can extend the comparison at the level of Bellman equation. We
consider $p^{v(.)}(\eta,x,s)$ , $s\geq t$ solution of 

\begin{equation}
dp+[A_{x}^{*}p(\eta,x,s)+\label{eq:6-13}
\end{equation}

\[
+\text{div}(g(x,v(s))p(\eta,x,s))-\dfrac{1}{2}|h(x)|^{2}\dfrac{\partial^{2}}{\partial\eta^{2}}(\eta^{2}p(\eta,x,s))\,]ds=-h(x)\dfrac{\partial}{\partial\eta}(\eta p(\eta,x,s)).dz(s)
\]
\[
p(\eta,x,t)=p(\eta,x)
\]
where $p(\eta,x)$ is a probability density , denoted in the sequel
by $p$. We can then define the payoff $J_{p,t}(v(.))$ by 

\begin{equation}
J_{p,t}(v(.))=E[\int_{t}^{T}\int p^{v(.)}(\eta,x,s)\eta f(x,v(s))d\eta dxds+\int p^{v(.)}(\eta,x,T)\eta f_{T}(x)d\eta dx]\label{eq:6-14}
\end{equation}
and we define the value function 

\begin{equation}
\Psi(p,t)=\inf_{v(.)}J_{p,t}(v(.))\label{eq:6-15}
\end{equation}
We can write Bellman equation corresponding to this problem. Indeed, 

\begin{equation}
\dfrac{\partial\Psi}{\partial t}+\int[-A_{x}\dfrac{\partial\Psi}{\partial p}(p,t)(\eta,x)+\label{eq:6-16}
\end{equation}
\[
+\dfrac{1}{2}|h(x)|^{2}\eta^{2}\dfrac{\partial^{2}}{\partial\eta^{2}}\dfrac{\partial\Psi}{\partial p}(p,t)(\eta,x)]p(\eta,x)d\eta dx+
\]
\[
+\dfrac{1}{2}\int\int\eta\tilde{\eta}\dfrac{\partial^{2}}{\partial\eta\partial\tilde{\eta}}\dfrac{\partial^{2}\Psi}{\partial p^{2}}(p,t)(\eta,x;\tilde{\eta},\tilde{x})h(x).h(\tilde{x})p(\eta,x)p(\tilde{\eta},\tilde{x})d\eta d\tilde{\eta}dxd\tilde{x}\,+
\]
\[
+\inf_{v}\int[\eta f(x,v)+D_{x}\dfrac{\partial\Psi}{\partial p}(p,t)(\eta,x).g(x,v)]p(\eta,x)d\eta dx=0
\]
\[
\Psi(p,T)=\int\eta f_{T}(x)p(\eta,x)d\eta dx
\]
 If we compare with the Bellman equation (\ref{eq:3-7}) rewritten
with the current notation ( with argument an unormalized probability)
we obtain 

\begin{equation}
\dfrac{\partial\Phi}{\partial t}+\int[-A_{x}\dfrac{\partial\Phi}{\partial q}(q,t)(x)q(x)dx+\label{eq:6-17}
\end{equation}
\[
+\dfrac{1}{2}\int\int\dfrac{\partial^{2}\Phi}{\partial q^{2}}(q,t)(x;\tilde{x})h(x).h(\tilde{x})q(x)q(\tilde{x})dxd\tilde{x}\,+
\]
\[
+\inf_{v}\int[f(x,v)+D_{x}\dfrac{\partial\Phi}{\partial q}(q,t)(x).g(x,v)]q(x)dx=0
\]
\[
\Phi(q,T)=\int f_{T}(x)q(x)dx
\]
Equations (\ref{eq:6-16}) and (\ref{eq:6-17}) are linked by the
formulas 

\begin{equation}
\Psi(p,t)=\Phi(\int\eta p(\eta,.)d\eta,t)\label{eq:6-18}
\end{equation}

\begin{equation}
\Phi(q,t)=\Psi(\delta(.-\int q(\xi)d\xi)\otimes\dfrac{q(.)}{\int q(\xi)d\xi},t)\label{eq:6-19}
\end{equation}

\subsection{THE LINEAR CASE}

If we go back to the linear case ( \ref{eq:4-3}) , we get 

\begin{equation}
dx=(Fx+Gv)dt+\sigma dw\label{eq:4-3-1}
\end{equation}
\[
d\eta=\eta.Hx.dz
\]
 therefore , in the set up \cite{BCFP}, we still have a nonlinear
system. Therefore, we cannot use the linear case of \cite{BCFP}.
This explains why our formulas are completely different. The fact
that we have an explicit solution of the system of HJB-FP equations
does not imply that we have an explicit solution of Bellman equation.
This is consistent with the spirit of the method of characteristics.

\end{document}